\newtheorem{thm}{Theorem}[section]
\newtheorem{cor}[thm]{Corollary}
\newtheorem{prop}[thm]{Proposition}
\newtheorem{defn}[thm]{Definition}
\newtheorem{rem}[thm]{Remark}
\numberwithin{equation}{section}
\newcommand{\condexp}[1]{\left|}
\newcommand{\eps}{\epsilon}
\newcommand{\To}{\longrightarrow}
\newcommand{\A}{\mathcal{A}}
\newcommand{\punt}{\mathbf{.}}
\begin{document}
\title{A new family of time-space harmonic polynomials
with respect to L\'evy processes}
\author{E. Di Nardo \footnote{Dipartimento di Matematica e Informatica,
Universit\`a degli Studi della Basilicata, Viale dell'Ateneo
Lucano 10, 85100 Potenza, Italia, elvira.dinardo@unibas.it}, I.
Oliva \footnote {Dipartimento di Matematica, Universit\`a di
Bologna, Piazza di Porta S. Donato 5, 40126 Bologna, Italia,
oliva@dm.unibo.it}}
\date{\today}
\maketitle
\begin{abstract}
By means of a symbolic method, a new family of time-space harmonic polynomials 
with respect to L\'evy processes is given. The coefficients of these polynomials 
involve a formal expression of L\'evy processes by which many identities are stated. 
We show that this family includes classical families of polynomials such as  
Hermite polynomials. Poisson-Charlier polynomials result to be a linear combinations 
of these new polynomials, when they have the property to be time-space harmonic with 
respect to the compensated Poisson process. The more general class of L\'evy-Sheffer 
polynomials is recovered as a linear combination of these new polynomials, when they
are time-space harmonic with respect to L\'evy processes of very general form. 
We show the role played by cumulants of L\'evy processes so that connections 
with boolean and free cumulants are also stated. 
\end{abstract}
\textsf{\textbf{keywords}:
time-space harmonic polynomial, L\'evy process, cumulant, L\'evy-Sheffer polynomial, umbral calculus}
%------------------------------------------------------------------------------------
\section{Introduction}
%------------------------------------------------------------------------------------
A family of polynomials $\{P(x,t)\}_{t \geq 0}$ is said to be {\it time-space harmonic} with
respect to a stochastic process $\{X_t\}_{t \geq 0}$ if $E[P(X_t,t) \,\, |\; \mathfrak{F}_{s}]
=P(X_s,s),$ for all $s \leq t,$ where $\mathfrak{F}_{s}=\sigma\left( X_\tau : \tau \leq s\right)$
is the natural filtration associated with $\{X_t\}_{t \geq 0}.$ For random walks $\{X_n\}_{n \geq 0},$ Neveu \cite{neveu} characterizes the family of time-space harmonic polynomials as the coefficients of the Taylor expansion
\begin{equation}
\frac{\exp\{z X_n\}}{E[\exp\{z X_n\}]} = \sum_{k \geq 0} R_k(X_n,n)\frac{z^k}{k!}
\label{expmart}
\end{equation}
in some neighborhood of the origin. If $\{X_n\}_{n \geq 0}$ is replaced by a L\'evy process $\{X_t\}_{t \geq 0},$
the left-hand side of (\ref{expmart}) is the so-called Wald's exponential martingale
\cite{Kuchler}. The usefulness of time-space harmonic polynomials with respect to 
L\'evy processes is that the stochastic process $\{P(X_t,t)\}$ is a martingale, whereas
$\{X_t\}$ does not necessarily have this property.

The Wald's exponential martingale is well defined only when the process admits moment generating
function $E[\exp\{z X_t\}]$ in a suitable neighborhood of the
origin. Different authors have tried to overcome this gap by
using other tools.  Sengupta \cite{Seng00} uses a
discretization procedure to extend the results proved by Goswami
and Sengupta in \cite{GS95}. Sol\'e and Utzet \cite{Sole} use Ito's formula
showing that time-space harmonic polynomials with respect to
L\'evy processes are linked to exponential complete Bell 
polynomials \cite{Comtet}. The Wald's exponential martingale
(\ref{expmart}) has been recently reconsidered also in
\cite{Sengupta08}, but without this giving rise to a closed expression
for these polynomials. 

In this paper,  by using a symbolic method, known in the
literature as the {\it classical umbral calculus}, we give a new family of time-space harmonic
polynomials, which could be easily implemented in any symbolic
software, see \cite{dinoli} as example. Thanks to the results in \cite{Sheffer}, we show that this new 
family includes and generalizes the exponential complete Bell polynomials.

The classical umbral calculus we use consists essentially in a moment symbolic calculus, since its basic 
device is to represent an unital sequence
of numbers by a symbol $\alpha,$ named umbra, i.e. to associate
the sequence $1, a_1, a_2, \ldots$ to the sequence $1, \alpha,
\alpha^2, \ldots$ of powers of $\alpha$ through an operator $E$
that looks like the expectation of random variables \cite{SIAM}. The $n$-th
element of the sequence is the $n$-th moment of $\alpha.$ As a
matter of fact, an umbra looks the framework of a random
variable with no reference to any probability space. 

In this paper, we define an operator $E[ \cdot | \alpha]$ that acts like the well-known 
conditional expectation of random variables.  The umbral version of  L\'evy processes we 
propose takes into account their infinite divisible
property. As corollaries, many identities are given on the coefficients of this family of 
time-space harmonic polynomials with respect to random
walks and L\'evy processes.  Moreover, this expression allows us also to emphasize the role played by cumulants
and to include boolean and free cumulants \cite{Bernoulli, free}. As example, we show that this new family 
includes Hermite polynomials which are time-space harmonic with respect to Brownian motion. We prove that Poisson-Charlier polynomials are linear combinations of the introduced polynomials when they are time-space harmonic 
with respect to compensated Poisson processes. We show that the more general class of L\'evy-Sheffer polynomials \cite{schoutens2, ST} are linear combinations of the introduced polynomials and we characterize a general form for the associated L\'evy processes.  

The paper is structured as follows. Section 2 is provided for readers unaware of the
classical umbral calculus. Let us underline that the theory of the classical umbral
calculus has now reached a more advanced level compared to the elements here resumed.
We have chosen to recall terminology, notation and the basic definitions strictly
necessary to deal with the object of this paper. In Section 3 the new
notion of conditional evaluation with respect to umbrae is introduced, which is
the key to characterize time-space harmonic polynomials in terms of umbrae. 
Examples and applications are introduced in Section 4, where special emphasis is devoted to 
the role played by cumulants in the expression of these polynomials. 
%-------------------------------------------------------------------------------------------
\section{The classical umbral calculus}
%-------------------------------------------------------------------------------------------
In the following, terminology, notation and some basic definitions of the
classical umbral calculus are recalled. We skip any proof: the reader
interested in deeper analysis is referred to the papers \cite{Dinardoeurop,
Sheffer}.

Let ${\mathbb R}[x]$ be the ring of polynomials  with real coefficients in the indeterminate $x.$
The classical umbral calculus is a syntax with an alphabet $\A=\{\alpha,\beta,\gamma, \ldots\}$
of elements, called \emph{umbrae}, and a linear functional $E\,:\,{\mathbb R}[x][\A]\To {\mathbb R}[x]$,
called \emph{evaluation}, such that $E[1]=1$ and
$$E[x^n \, \alpha^i \, \beta^j \, \cdots \, \gamma^k ]=x^n \, E[\alpha^i] \, E[\beta^j] \, \cdots \,  E[\gamma^k] 
\quad \hbox{(uncorrelation property)}$$
where $\alpha, \beta, \ldots, \gamma$ are distinct umbrae and $n,i,j, \ldots,k$ are nonnegative integers.

A sequence $\{a_n\}_{n \geq 0} \in {\mathbb R}[x],$ with $a_0=1,$  is \emph{umbrally represented} by an umbra
$\alpha$ if $E[\alpha^n]=a_n,\;$ for all nonnegative integers $n$. Recall that $a_n$ is called the $n$-th
{\it moment} of $\alpha$. An umbra is \textit{scalar} if its moments are elements of ${\mathbb R}$
while it is \textit{polynomial} if its moments are
polynomials of ${\mathbb R}[x].$ Special scalar umbrae are:
\begin{description}
\item[{\it i)}] the \emph{augmentation} umbra $\epsilon,$ such that $E[\epsilon^n]=\delta_{0,n},$\footnote{The symbol $\delta_{i,j}$ denotes the {\it Kronecker delta}, that is
$\delta_{i,j}=1$ if $i=j,$ otherwise $\delta_{i,j}=0.$}
for all nonnegative integers $n;$ 
\item[{\it ii)}] the \emph{unity} umbra $u,$ such that $E[u^n]=1,$ for all nonnegative integers $n;$
\item[{\it iii)}] the \emph{Bell} umbra $\beta,$ whose moments are the Bell numbers;
\item[{\it iv)}] the \emph{singleton} umbra $\chi$ such that $E[\chi]=1$ and
$E[\chi^n]=0$, for all integers $n > 1.$
\end{description}

The core of this moment symbolic calculus consists in the definition of the {\it dot-product} of two umbrae, which 
is fundamental both in the construction of time-space harmonic polynomials and in their applications.  
We recall in short the steps necessary to give this definition. 

First let us remark that in the alphabet $\A$ two (or more)
distinct umbrae may represent the same sequence of moments. More formally, two umbrae $\alpha$ and $\gamma$ are said to be {\it
similar} when $E[\alpha^n]=E[\gamma^n]$ for all nonnegative integers $n,$ in symbols
$\alpha \equiv \gamma.$ Therefore, given a sequence $\{a_n\}_{n \geq 0},$ there are infinitely many distinct, and thus similar umbrae,
representing this sequence. 

Now, define the symbol $n \punt \alpha$ representing $\alpha'+\alpha''+\cdots+\alpha''',$ where $\{\alpha',\alpha'',\ldots,\alpha'''\}$ is a set of $n$ uncorrelated umbrae similar to $\alpha.$ 
The symbol $n \punt \alpha$ is an example of \emph{auxiliary umbra}. In a \emph{saturated} umbral calculus, the auxiliary umbrae are treated as they were elements of $\A$ \cite{SIAM}.  The umbra $n \punt \alpha$ is called the 
{\it dot-product} of the integer $n$ and the umbra $\alpha.$ Its moments are
\cite{Dinardoeurop}:
\begin{equation}
q_i(n)=E[(n \punt \alpha)^i]=\sum_{k=1}^i (n)_k B_{i,k}(a_1, a_2, \ldots, a_{i-k+1}),
\label{(1)}
\end{equation}
where $(n)_k$ is the lower factorial and $B_{i,k}$ are the exponential partial Bell polynomials
\cite{Comtet}.

In (\ref{(1)}), the polynomial $q_i(n)$ is of degree $i$ in $n.$ If the integer $n$ is replaced by
$t \in {\mathbb R},$ in (\ref{(1)}) we have $q_i(t) = \sum_{k=1}^i (t)_k
B_{i,k}(a_1, a_2, \ldots, a_{i-k+1}).$ We denote by $t \punt \alpha$ the auxiliary umbra 
such that $E[(t \punt \alpha)^i] = q_i(t),$ for all nonnegative integers $i$.
The umbra $t \punt \alpha$ is the dot-product of $t$ and $\alpha.$ Among its properties, we just recall
the distributive property:
\begin{equation}
(t + s) \punt \alpha \equiv t \punt \alpha + s \punt \alpha^{\prime}, \quad s,t \in {\mathbb R}
\label{(distributive)}
\end{equation}
where $\alpha^{\prime} \equiv \alpha.$ In particular in (\ref{(1)}) we can replace $n$ with $-t$ by obtaining the auxiliary umbra $-t \punt \alpha$ with the remarkable property 
\begin{equation}
-t \punt \alpha + t \punt \alpha^{\prime} \equiv \eps,
\label{(inverse)}
\end{equation}
where $\alpha^{\prime} \equiv \alpha.$ Due to property (\ref{(inverse)}), the umbra $-t \punt \alpha$ is named the {\it inverse} umbra of $t \punt \alpha.$ \footnote{Since $-t \punt \alpha$ and $t \punt \alpha$ are two distinct symbols, they can be considered uncorrelated, therefore $-t \punt \alpha + t \punt \alpha^{\prime} \equiv 
-t \punt \alpha + t \punt \alpha \equiv \eps.$ When no confusion occurs, we will use this last similarity instead of
(\ref{(inverse)}).} 

Let us consider again the polynomial $q_i(t)$ and suppose to replace $t$ by an umbra $\gamma.$ The
polynomial $q_i(\gamma)$ is an {\it umbral polynomial} in ${\mathbb R}[x][\A],$ with support 
$\hbox{\rm supp} \,(q_i(\gamma))=\{\gamma\}.$ Recall that the support $\hbox{\rm supp} \, (p)$ of an umbral polynomial
$p \in {\mathbb R}[x][\A]$ is the set of all umbrae occurring in it. The {\it dot-product} of $\gamma$ 
and $\alpha$ is the auxiliary umbra $\gamma \punt \alpha$ such that $E[(\gamma \punt \alpha)^i]=E[q_i(\gamma)]$ for all
nonnegative integers $i.$ Two umbral polynomials $p$ and $q$ are said to be \emph{umbrally equivalent} 
if $E[p]=E[q],$ in symbols $p \simeq q.$ Therefore equation (\ref{(1)}), with $n$ replaced by an umbra $\gamma,$ 
can be written as $q_i(\gamma) \simeq (\gamma \punt \alpha)^i \simeq \sum_{k=1}^i (\gamma)_k B_{i,k}(a_1, a_2, \ldots, a_{i-k+1}).$ Special dot-product umbrae are the 
$\alpha$-cumulant umbra $\chi \punt \alpha$ and $\alpha$-partition
umbra $\beta \punt \alpha,$ that we will use later on. Recall that if $\alpha, \gamma$ and $\eta$ are uncorrelated umbrae, then
\begin{equation}
(\alpha + \eta) \punt \gamma \equiv \alpha \punt \gamma + \eta \punt \gamma.
\label{(distributive1)}
\end{equation}

%-------------------------------------------------------------------------------------
\section{Time-space harmonic polynomials}
%-------------------------------------------------------------------------------------
Denote by ${\mathcal X}$ the set ${\mathcal X} = \{\alpha\}.$
\begin{defn}\label{condeval1}
The linear operator $E(\;\cdot\; \vline \,\, \alpha):\, {\mathbb R}[x][\A]
\;\longrightarrow\; {\mathbb R}[{\mathcal X}]$
such that
\begin{itemize}
\item[{\it i)}] $E(1 \,\, \vline \,\,\alpha)=1$;
\item[{\it ii)}] $E(x^m \alpha^n \gamma^i \delta^j\cdots \, \,  \vline \,\, \alpha)=x^m \alpha^n
 E[\gamma^i]E[\delta^j]\cdots$ for uncorrelated umbrae
$\alpha, \gamma, \delta, \ldots$ and for nonnegative integers $m,n,i,j,\ldots$
\end{itemize}
is called \emph{conditional evaluation} with respect to $\alpha.$
\end{defn}

In other words, Definition \ref{condeval1} says that the conditional evaluation
with respect to $\alpha$ handles the umbra $\alpha$ as it
was an indeterminate. The proofs of the following Propositions are straightforward taking into account Definition \ref{condeval1}.
\begin{prop} \label{Propunc}
If $\alpha \in \A$ and $p \in {\mathbb R}[x][\A]$ with
$\alpha \not \in \hbox{\rm supp}(p),$ then $E(p  \,\, \vline \,\, \alpha)=E[p].$
\end{prop}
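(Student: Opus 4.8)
The plan is to reduce the statement to the defining properties of the conditional evaluation in Definition~\ref{condeval1} together with the uncorrelation property of the ordinary evaluation $E$. Since $E(\;\cdot\;\vline\;\alpha)$ is a linear operator on ${\mathbb R}[x][\A]$, by linearity it suffices to prove the claim for a spanning set of ${\mathbb R}[x][\A]$, namely for monomials of the form $p = x^m\,\gamma_1^{i_1}\cdots\gamma_r^{i_r}$, where $\gamma_1,\dots,\gamma_r$ are distinct umbrae, the $i_j$ are nonnegative integers, and --- crucially --- $\alpha\notin\{\gamma_1,\dots,\gamma_r\}=\hbox{\rm supp}(p)$.

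First I would observe that for such a monomial, the defining property \emph{ii)} of Definition~\ref{condeval1} applies with the exponent of $\alpha$ equal to $0$: writing $p = x^m\,\alpha^0\,\gamma_1^{i_1}\cdots\gamma_r^{i_r}$, we get $E(p\,\vline\,\alpha) = x^m\,\alpha^0\,E[\gamma_1^{i_1}]\cdots E[\gamma_r^{i_r}] = x^m\,E[\gamma_1^{i_1}]\cdots E[\gamma_r^{i_r}]$. On the other hand, applying the uncorrelation property of the ordinary evaluation $E$ to the same monomial gives $E[p] = x^m\,E[\gamma_1^{i_1}]\cdots E[\gamma_r^{i_r}]$. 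Hence $E(p\,\vline\,\alpha) = E[p]$ for every such monomial. Note one should also handle the degenerate case $r=0$, i.e.\ $p = x^m$, where both sides equal $x^m$ directly from property \emph{i)} and $E[1]=1$; and the constant case $m=0$ is subsumed.

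Then I would extend from monomials to arbitrary $p\in{\mathbb R}[x][\A]$ with $\alpha\notin\hbox{\rm supp}(p)$ by linearity: any such $p$ is a finite ${\mathbb R}$-linear combination of monomials of the above type, none of which involves $\alpha$ (since $\alpha$ is not in the support of $p$), so both $E(\;\cdot\;\vline\;\alpha)$ and $E$ distribute over the sum and the scalar coefficients, and the monomial-wise equality propagates to $p$. This completes the argument.

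I do not expect any genuine obstacle here --- the proof is purely formal, as the authors themselves indicate by calling it ``straightforward''. The only point requiring a little care is bookkeeping: making sure that when one writes a general element of ${\mathbb R}[x][\A]$ as a linear combination of monomials one may always choose those monomials to have $\alpha$-exponent zero precisely because $\alpha\notin\hbox{\rm supp}(p)$, so that clause \emph{ii)} of Definition~\ref{condeval1} is applicable with $n=0$ and matches the uncorrelation property of $E$ term by term.
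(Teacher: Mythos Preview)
Your proposal is correct and is precisely the routine verification the paper has in mind when it says the proof is ``straightforward taking into account Definition~\ref{condeval1}'': reduce by linearity to monomials not involving $\alpha$, apply clause~\emph{ii)} with $n=0$, and match against the uncorrelation property of $E$. The paper gives no further detail beyond that remark, so there is nothing to compare.
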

\begin{cor} \label{Propuncor}
If $\alpha \in \A$ and $p \in {\mathbb R}[x][\A],$ then $E[E(p\,\, \vline \,\, \alpha)]=E[p].$
\end{cor}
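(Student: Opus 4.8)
The plan is to reduce to monomials by linearity and then to match the two sides using the uncorrelation property of $E$ together with part \textit{ii)} of Definition \ref{condeval1}. Since $E(\,\cdot\, \vline\, \alpha)$ and $E$ are both linear, so is the composition $p \mapsto E[E(p \vline \alpha)]$; hence it suffices to verify $E[E(p \vline \alpha)] = E[p]$ on a set of polynomials spanning ${\mathbb R}[x][\A]$. I would take as spanning set the monomials $p = x^m \alpha^n \gamma^i \delta^j \cdots$ in which $\gamma, \delta, \ldots$ are umbrae, distinct from one another and from $\alpha$, and $m, n, i, j, \ldots$ are nonnegative integers; allowing $n=0$ incorporates the case $\alpha \not\in \hbox{\rm supp}(p)$, so no separate discussion of that situation is needed.

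For such a monomial, part \textit{ii)} of Definition \ref{condeval1} gives
\[ E(x^m \alpha^n \gamma^i \delta^j \cdots \vline \alpha) = x^m \alpha^n\, E[\gamma^i]\, E[\delta^j] \cdots . \]
The right-hand side is again an element of ${\mathbb R}[x][\A]$ --- in fact $x^m \alpha^n$ multiplied by the ${\mathbb R}[x]$-scalar $E[\gamma^i] E[\delta^j] \cdots$ --- so $E$ can be applied to it. Using that $E$ is ${\mathbb R}[x]$-linear and that $E[x^m \alpha^n] = x^m E[\alpha^n]$, this yields
\[ E\bigl[ x^m \alpha^n\, E[\gamma^i]\, E[\delta^j] \cdots \bigr] = x^m\, E[\gamma^i]\, E[\delta^j] \cdots\, E[\alpha^n] . \]
On the other hand, applying the uncorrelation property directly to the original monomial gives
\[ E[ x^m \alpha^n \gamma^i \delta^j \cdots ] = x^m\, E[\alpha^n]\, E[\gamma^i]\, E[\delta^j] \cdots , \]
and the two right-hand sides agree. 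By linearity this establishes the identity for arbitrary $p \in {\mathbb R}[x][\A]$.

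I do not expect a genuine obstacle here: the content is purely bookkeeping. The one point requiring a little care is, after the conditional evaluation has ``frozen'' $\alpha$ and replaced every other umbra by its moment, recognizing that the result is still a legitimate umbral polynomial on which $E$ acts, with the frozen moments $E[\gamma^i], E[\delta^j], \ldots$ playing the role of ${\mathbb R}[x]$-coefficients. If one prefers not to use the $n=0$ convention, one may instead split into the cases $\alpha \in \hbox{\rm supp}(p)$ and $\alpha \not\in \hbox{\rm supp}(p)$: in the latter, Proposition \ref{Propunc} gives $E(p \vline \alpha) = E[p] \in {\mathbb R}[x]$, whence $E[E(p \vline \alpha)] = E[p]$ because $E$ restricts to the identity on ${\mathbb R}[x]$; the former case is then handled by the monomial computation above.
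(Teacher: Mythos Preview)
Your argument is correct and is exactly the straightforward verification the paper has in mind: the paper does not spell out a proof at all, merely remarking that the result follows directly from Definition~\ref{condeval1}. Your reduction to monomials via linearity and the subsequent use of the uncorrelation property is precisely that direct verification.
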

This last corollary brings to light the parallelism between the conditional evaluation
$E(\;\cdot\; \vline \,\, \alpha)$ and the well-known conditional expectation in probability
theory \cite{Feller}. As it happens in probability theory, the conditional evaluation is an element of
${\mathbb R}[x][\A]$ and, if we take the overall evaluation of $E(p\,\, \vline \,\, \alpha),$
this gives $E[p].$

The conditional evaluation with respect to the auxiliary umbra $n \punt \alpha$ is such that 
$E[(n+1) \punt \alpha  \,\, \vline \,\, n \punt \alpha] =
E(n \punt \alpha + \alpha^{\prime} \,\, \vline \,\, n \punt \alpha) = n \punt \alpha + E[\alpha^{\prime}],$ 
with $\alpha^{\prime}$ an umbra similar to $\alpha.$ By similar arguments, for all nonnegative integers 
$n$ and $m$ we have
$$E( [(n+m) \punt \alpha]^k  \,\, | \,\, n \punt \alpha) =
E([n \punt \alpha + m \punt \alpha^{\prime}]^k \,\, | \,\, n \punt \alpha) = \sum_{j=0}^k \binom{k}{j} (n \punt \alpha)^j E[(m \punt \alpha^{\prime})^{k-j}],$$ and by taking the evaluation of both sides we recover $E( [(n+m) \punt \alpha]^k)$ due to distributive property (\ref{(distributive)}). 
Therefore, for $t \geq 0$ we define the conditional evaluation of $t \punt \alpha$ with respect to the auxiliary umbra $s \punt \alpha,$ with $0 \leq s \leq t$ such as
\begin{equation}
E[(t \punt \alpha)^k  \,\, | \,\, s \punt \alpha] = \sum_{j=0}^k \binom{k}{j} (s \punt \alpha)^j 
E([(t-s) \punt \alpha^{\prime}]^{k-j}).
\label{inverseII}
\end{equation}

\begin{defn}\label{condeval}
Let $\{P(x,t)\} \in {\mathbb R}[x]$ be a family of polynomials indexed by $t \geq 0.$
$P(x,t)$ is said to be a {\rm time-space harmonic polynomial} with respect to the family
of auxiliary umbrae $\{q(t)\}_{t \geq 0}$ if and only if
$E \left[ P(q(t),t) \, \, \vline \, \, q(s) \right] = P(q(s),s)$ for all $0 \leq s \leq t.$
\end{defn}

\begin{thm}\label{UTSH2}
For all nonnegative integers $k,$ the family of polynomials
\begin{equation}
Q_k(x,t)=E[(x - t \punt \alpha)^k] \in {\mathbb R}[x]
\label{(tshumbral)}
\end{equation}
is time-space harmonic\footnote{When no confusion occurs, we will use the notation $x - t \punt \alpha$ to denote the polynomial umbra $- t \punt \alpha + x = x + (- t \punt \alpha).$} with respect to $\{t \punt \alpha\}_{t
\geq 0}.$
\end{thm}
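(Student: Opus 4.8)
The goal is to verify the defining identity of Definition \ref{condeval}, namely
\[
E\bigl[\,Q_k(t\punt\alpha,\,t)\ \big|\ s\punt\alpha\,\bigr]=Q_k(s\punt\alpha,\,s)
\qquad(0\le s\le t),
\]
where $Q_k(x,t)=E[(x-t\punt\alpha)^k]$. The plan is to compute the left-hand side directly using the conditional evaluation rule (\ref{inverseII}) together with the inverse-umbra relation (\ref{(inverse)}), and then to recognize the result as $Q_k(s\punt\alpha,s)$.

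First I would substitute $x=t\punt\alpha$ into (\ref{(tshumbral)}): since $Q_k(x,t)$ is obtained by taking the \emph{overall} evaluation of $(x-t'\punt\alpha)^k$ in a fresh copy $t'\punt\alpha\equiv t\punt\alpha$ uncorrelated from the umbra to be plugged in for $x$, we have $Q_k(t\punt\alpha,t)=E\bigl[(t\punt\alpha - t'\punt\alpha)^k\,\vline\, t\punt\alpha\bigr]$-type bookkeeping — more carefully, $Q_k(t\punt\alpha,t)$ is the polynomial in the single umbra $t\punt\alpha$ whose value is $\sum_j\binom{k}{j}(t\punt\alpha)^j E[(-t'\punt\alpha)^{k-j}]$. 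Now apply the conditional evaluation with respect to $s\punt\alpha$. By (\ref{inverseII}) (applied to each power $(t\punt\alpha)^j$, writing $t\punt\alpha\equiv s\punt\alpha + (t-s)\punt\alpha''$ with $(t-s)\punt\alpha''$ uncorrelated from $s\punt\alpha$), and by Proposition \ref{Propunc} on the terms not involving $s\punt\alpha$, the whole expression becomes a polynomial in $s\punt\alpha$ obtained by replacing $t\punt\alpha$ with $s\punt\alpha + (t-s)\punt\alpha''$ and taking overall evaluation of everything except $s\punt\alpha$. Concretely the left-hand side equals
\[
E\Bigl[\bigl(s\punt\alpha + (t-s)\punt\alpha'' - t\punt\alpha'\bigr)^k \ \big|\ s\punt\alpha\Bigr],
\]
with $\alpha',\alpha''$ uncorrelated copies of $\alpha$, uncorrelated from $s\punt\alpha$.

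The key algebraic step is then to simplify $(t-s)\punt\alpha'' - t\punt\alpha'$ inside the evaluation. Because $\alpha'$ and $\alpha''$ are uncorrelated copies of $\alpha$, by the distributive property (\ref{(distributive)}) one has $t\punt\alpha' \equiv (t-s)\punt\alpha' + s\punt\alpha'''$, and by (\ref{(inverse)})/(\ref{(distributive1)}) the contributions $(t-s)\punt\alpha'' $ and $-(t-s)\punt\alpha'$ combine to an augmentation umbra $\eps$ under the overall evaluation (they are similar and uncorrelated), leaving only $-s\punt\alpha'''$; that is, $(t-s)\punt\alpha'' - t\punt\alpha' \equiv -s\punt\alpha'''$ as auxiliary umbrae uncorrelated from $s\punt\alpha$. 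Hence the left-hand side collapses to $E\bigl[(s\punt\alpha - s\punt\alpha''')^k\ \big|\ s\punt\alpha\bigr] = E[(s\punt\alpha - s\punt\alpha''')^k\ \big|\ s\punt\alpha]$, and taking the inner evaluation over the fresh copy $s\punt\alpha'''$ gives exactly $Q_k(s\punt\alpha,s)$ by definition (\ref{(tshumbral)}).

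The main obstacle I expect is the careful management of the copies: one must be scrupulous that the copy of $t\punt\alpha$ used to define $Q_k(x,t)$ is uncorrelated from the argument $x$, that the decomposition $t\punt\alpha\equiv s\punt\alpha+(t-s)\punt\alpha''$ uses a copy uncorrelated from everything in sight, and that the cancellation step genuinely reduces to the inverse-umbra identity rather than to an illegitimate "subtraction" of umbrae. Formalizing this is most safely done by working entirely with the moment formula (\ref{(1)}) and the binomial expansion (\ref{inverseII}), expanding both sides into sums over partitions/Bell polynomials and matching term by term; but the umbral shortcut via (\ref{(distributive)}) and (\ref{(inverse)}) is the conceptual heart, and once the copies are correctly tracked the identity is immediate. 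A small additional check is linearity of $E(\,\cdot\,\vline\,s\punt\alpha)$, which lets us push the conditional evaluation through the (finite) binomial sum — this is given by Definition \ref{condeval1}.
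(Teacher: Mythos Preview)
Your proposal is correct and follows essentially the same route as the paper: substitute $x=t\punt\alpha$, apply the conditional-evaluation formula (\ref{inverseII}) to split $t\punt\alpha$ as $s\punt\alpha+(t-s)\punt\alpha''$, and then use (\ref{(distributive)}) and (\ref{(inverse)}) to collapse $(t-s)\punt\alpha''-t\punt\alpha'$ to $-s\punt\alpha'''$, recovering $Q_k(s\punt\alpha,s)$. The only cosmetic difference is that the paper carries out the binomial expansion explicitly and rearranges the resulting double sum before invoking the cancellation, whereas you keep the computation packaged at the umbral level; your extra care in tracking uncorrelated copies is exactly what underlies the paper's index manipulation.
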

\begin{proof}
From (\ref{(tshumbral)}), by applying the linearity property of the evaluation $E,$ we have
\begin{equation} 
Q_k(x,t)=\sum_{j=0}^k \binom{k}{j} x^{k-j} E[(- t \punt \alpha)^j]
\label{tsh}
\end{equation}
for all nonnegative integers $k.$ Thanks to (\ref{inverseII}) and (\ref{(distributive)}), we have
\begin{eqnarray*}
E\left(Q_k(t \punt \alpha,t) \, \, \vline \, \, s \punt \alpha \right) & = & 
 \sum_{j=0}^k \binom{k}{j} E[(t \punt \alpha)^{k-j}  \, \, | \, \, s \punt \alpha ] \, E[(- t \punt \alpha)^j] \\
& = & \sum_{j=0}^k \binom{k}{j} \left\{ \sum_{i=0}^{k-j} \binom{k-j}{i} (s \punt \alpha)^{i}
E\left(\{(t - s) \punt \alpha^{\prime} \}^{k-j-i} \right)
 \right\} E[(- t \punt \alpha)^j]. 
\end{eqnarray*}   
By suitably rearranging the terms, we have
\begin{eqnarray*}
E\left(Q_k(t \punt \alpha,t) \, \, \vline \, \, s \punt \alpha \right)  
& = & \sum_{j=0}^k \binom{k}{j} (s \punt \alpha)^{j} \left\{ \sum_{i=0}^{k-j} \binom{k-j}{i} 
E\left(\{(t - s) \punt \alpha^{\prime} \}^{k-j-i}  \right) E[(- t \punt \alpha)^i] 
\right\} \\
& = & \sum_{j=0}^k \binom{k}{j} (s \punt \alpha)^{j} E\left(\{- t \punt \alpha + (t - s) \punt \alpha^{\prime}  \}^{k-j} \right)
= \sum_{j=0}^k \binom{k}{j} (s \punt \alpha)^{j} E[(- s \punt \alpha)^{k-j}].
\end{eqnarray*} 
\end{proof}
\begin{cor} \label{AST_coeff}
If $Q_k(x,t) = \sum_{j=0}^{k} q_j^{(k)}(t) \, x^j,$
then 
\begin{enumerate}
\item[{\it i)} ] $q_j^{(k)}(t) = \binom{k}{j}E[(-t\punt\alpha)^{k-j}],$
for $t >0$ and $j=0,1,\ldots,k;$ in particular $q_k^{(k)}(t)=1;$  
\item[{\it ii)}] $q_j^{(k)}(0) = 0$ for $j=0,1,\ldots,k-1$ and $q_k^{(k)}(0)=1.$ 
\end{enumerate}
In particular we have $Q_k(x,0) = x^k$ for all nonnegative integers $k.$
\end{cor}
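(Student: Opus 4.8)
The plan is to extract the coefficients $q_j^{(k)}(t)$ directly from the expansion of $Q_k(x,t)$ already produced inside the proof of Theorem~\ref{UTSH2}, and then to specialise to $t=0$. No new idea is needed; this is essentially bookkeeping on equation~(\ref{tsh}).

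First I would recall equation~(\ref{tsh}): applying linearity of $E$ to the binomial expansion of the umbral polynomial $(x - t\punt\alpha)^k$ gives $Q_k(x,t) = \sum_{j=0}^k \binom{k}{j} x^{k-j} E[(-t\punt\alpha)^j]$. Reindexing by $i=k-j$ and using $\binom{k}{k-i}=\binom{k}{i}$ turns this into $Q_k(x,t) = \sum_{i=0}^k \binom{k}{i} E[(-t\punt\alpha)^{k-i}]\, x^i$, so matching the coefficient of $x^j$ with $Q_k(x,t) = \sum_{j=0}^k q_j^{(k)}(t)\, x^j$ yields item~(i): $q_j^{(k)}(t) = \binom{k}{j} E[(-t\punt\alpha)^{k-j}]$ for $j=0,1,\ldots,k$. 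For $j=k$ the zeroth umbral power is $1$, hence $q_k^{(k)}(t)=\binom{k}{k}E[1]=1$.

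Next I would prove item~(ii) by evaluating the auxiliary umbra at $t=0$. From~(\ref{(1)}) with $n$ replaced by $0$ --- equivalently, since $q_i(t)=\sum_{k=1}^i (t)_k B_{i,k}(a_1,\ldots,a_{i-k+1})$ has no constant term when $i\geq 1$, because $(0)_k=0$ for all $k\geq 1$ --- one gets $E[(0\punt\alpha)^i]=\delta_{0,i}$, i.e. $0\punt\alpha \equiv \eps$, and likewise $-0\punt\alpha \equiv \eps$. Substituting into item~(i), $q_j^{(k)}(0) = \binom{k}{j} E[\eps^{k-j}] = \binom{k}{j}\delta_{0,k-j}$, which vanishes for $j=0,1,\ldots,k-1$ and equals $1$ for $j=k$. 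Summing the monomials then gives $Q_k(x,0) = q_k^{(k)}(0)\, x^k = x^k$ for every nonnegative integer $k$; this can also be seen in one line from $Q_k(x,0) = E[(x - 0\punt\alpha)^k] = E[(x+\eps)^k] = x^k$. The only step carrying any content is the identification $0\punt\alpha \equiv \eps$, so that is where I would be slightly careful; everything else is mechanical.
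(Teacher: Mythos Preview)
Your proof is correct and follows essentially the same route as the paper: part~(i) is read off directly from equation~(\ref{tsh}), and part~(ii) is obtained by specialising to $t=0$ via $0\punt\alpha\equiv\eps$. You are simply more explicit than the paper in justifying $-0\punt\alpha\equiv 0\punt\alpha\equiv\eps$ through the vanishing of $(0)_k$ for $k\geq 1$ in~(\ref{(1)}), which is a welcome clarification rather than a different argument.
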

\begin{proof}
Property {\it i)} follows from (\ref{tsh}). Property {\it ii)} follows by observing that
$q_j^{(k)}(0) = \binom{k}{j}E[(0 \punt \alpha)^{k-j}] = E[\eps^{k-j}]$ for $j=0,1,\ldots,k-1$ and $q_k^{(k)}(0)=E(\eps^0)=1.$
\end{proof}
The sequence of polynomials $\{Q_k(x,t)\}$ is umbrally represented by the polynomial umbra $x - t \punt \alpha.$
We call $x - t \punt \alpha$ the {\it time-space harmonic polynomial umbra} with respect to
$t \punt \alpha.$ 
\begin{prop} \label{Appell}
The time-space harmonic polynomial umbra $x - t \punt \alpha$ is the Appell umbra of $-t \punt \alpha.$
\end{prop}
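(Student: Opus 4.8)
The plan is to reduce the proposition to the definition of the Appell umbra together with the notational convention introduced in the footnote to Theorem~\ref{UTSH2}. Recall from \cite{Sheffer} that, for an umbra $\gamma,$ its Appell umbra is the polynomial umbra $x + \gamma,$ so that the sequence of moments $\{E[(x+\gamma)^k]\}_{k \ge 0}$ is the Appell polynomial sequence associated with $\gamma$; equivalently, $\{p_k(x)\}$ is this sequence if and only if $p_k(0) = E[\gamma^k]$ and $\frac{d}{dx} p_k(x) = k\, p_{k-1}(x)$ for all nonnegative integers $k.$ Since, by the construction following (\ref{(1)}), $-t\punt\alpha$ is itself a well-defined auxiliary umbra, I would simply specialize this with $\gamma = -t\punt\alpha$: by the footnote to Theorem~\ref{UTSH2}, the symbol $x - t\punt\alpha$ denotes precisely the polynomial umbra $x + (-t\punt\alpha),$ hence its moments $Q_k(x,t) = E[(x - t\punt\alpha)^k]$ are by definition the Appell polynomials of $-t\punt\alpha,$ and we are done.

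If one wishes to avoid appealing to the convention, the alternative I would use is to verify the two characterizing properties directly from the closed form (\ref{tsh}). Putting $x = 0$ in $Q_k(x,t) = \sum_{j=0}^k \binom{k}{j} x^{k-j} E[(-t\punt\alpha)^j]$ isolates the term $j = k$ and gives $Q_k(0,t) = E[(-t\punt\alpha)^k]$ (consistently with Corollary~\ref{AST_coeff}); termwise differentiation in $x,$ using $\binom{k}{j}(k-j) = k\binom{k-1}{j},$ gives $\frac{d}{dx} Q_k(x,t) = k\, Q_{k-1}(x,t).$ Both computations are routine, and together they identify $\{Q_k(x,t)\}$ with the Appell sequence of $-t\punt\alpha.$

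I do not expect any real obstacle: the content of the proposition is conceptual --- the recognition that the time-space harmonic polynomial umbra built in (\ref{(tshumbral)}) from the inverse umbra $-t\punt\alpha$ is nothing more than a classical Appell umbra --- rather than computational. The one point deserving a line of care is that $x - t\punt\alpha$ must be read as $x$ added to the single umbra $-t\punt\alpha,$ and not manipulated via (\ref{(inverse)}) into something similar to $x.$ This identification is exactly what will be exploited in Section~4 when specializing $\alpha$ to recover, for instance, the Hermite polynomials as time-space harmonic polynomials with respect to Brownian motion.
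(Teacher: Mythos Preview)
Your proposal is correct and matches the paper's own argument: the paper simply states that ``the result follows by the definition of Appell umbra given in \cite{Sheffer},'' which is precisely your first approach via the footnote convention $x - t\punt\alpha = x + (-t\punt\alpha)$. Your alternative verification of the Appell properties from (\ref{tsh}) is a welcome addition and in fact supplies the derivative identity that the paper records immediately after the proposition.
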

The result follows by the definition of Appell umbra given in \cite{Sheffer}. In particular Proposition
\ref{Appell} means that the sequence of polynomials $\{Q_k(x,t)\}$ is an Appell sequence, that is
$$\frac{{\rm d}}{{\rm d} x}Q_k(x,t) = k Q_{k-1}(x,t), \qquad \hbox{{\rm for all integers $k \geq 1$.}}$$

Let us observe that every linear combination of polynomials $\{Q_k(x,t)\}_{k \geq 1}$ is a time-space harmonic polynomial with respect to $\{t \punt \alpha\}_{t \geq 0}  .$
\begin{rem}
{\rm Note that the family of auxiliary umbrae $\{t \punt \alpha\}_{t \in I},$ with $I \subset  {\mathbb R}^{+},$
is the umbral counterpart of a stochastic process $\{X_t\}_{t \in I}$ 
such that $E[X_t^k]=E[(t \punt \alpha)^k]$ for all nonnegative integers $k.$ 
This stochastic process is a L\'evy process, see next section and in particular Remark \ref{cumlev} for a parallelism between the
definition of $t \punt \alpha$ and the infinite divisible property of a L\'evy process. Therefore, the 
polynomials $\{Q_k(x,t)\}$ are time-space harmonic with respect to L\'evy processes. If the moments
of $\{X_t\}_{t \in I}$ are defined only up to some finite $m,$ the representation (\ref{(tshumbral)})
still holds up to $m,$ because it involves moments of order less or equal to $m,$ see also next remark.} 
\end{rem}

\begin{rem}
{\rm The \emph{generating function} of an umbra $\alpha$ is the formal power series
$f(\alpha, z)= \sum_{n\geq0} a_n \frac{z^n}{n!} \in {\mathbb R}[x][[z]]$ whose coefficients are 
the moments of the umbra, see \cite{Dinardoeurop} for more details. Formal power series allow 
us to work  with generating functions which do not have a positive radius 
of convergence or having undefined coefficients  \cite{Stanley}.  The generating 
function of the time-space harmonic polynomial umbra $x -t \punt \alpha$ is 
\begin{equation}
f(x - t \punt \alpha, z)=\frac{\exp\{xz\}}{f(\alpha,z)^t}=\sum_{k \geq 0} Q_k(x,t) \frac{z^k}{k!}.
\label{(wald)}
\end{equation}
By replacing $x$ with $t \punt \alpha$ in (\ref{(wald)}), we recover the Wald's exponential martingale  (\ref{expmart}). Equality of
two formal power series is interpreted as the equality of their coefficients, so that $E[R_k(X_t,t)] = E[Q_k(t \punt \alpha,t)].$ 

Also the Wald's identity $\sum_{k \geq 0} E[R_k(X_t,t)] z^k/k!=1$
follows from (\ref{expmart}). Therefore, the sequence $\{E[R_k(X_t,t)]\}_{k \geq 0} \in {\mathbb R}$ 
is umbrally represented by the augmentation umbra $\eps,$ with $f(\eps,z)=1,$ for all $t \geq 0.$ But this 
is exactly what it happens when in the polynomial umbra $x -t \punt \alpha$ we replace 
$x$ with $t \punt \alpha.$  
}
\end{rem}

The following corollary specifies the dependence of the coefficients of
$Q_k(x,t)$ in (\ref{(tshumbral)})  on the umbra $\alpha.$
\begin{cor}
If $\{a_n\}$ is the sequence umbrally represented by the umbra $\alpha$ and 
$\{Q_k(x,t)\}$ are time-space harmonic polynomials with respect to $\{t \punt \alpha\}_{t \geq 0},$ 
then $Q_k(x,t) = \sum_{j,i=0}^{k} c^{(k)}_{i,j} \, t^i \, x^j,$ with
$$c^{(k)}_{i,j} = \binom{k}{j} \sum_{\lambda \vdash k-j} {\rm d}_{\lambda} (-1)^{2l(\lambda)+i} \, s[l(\lambda),i]
\, a_1^{r_1} a_2^{r_2} \cdots $$
where $s[l(\lambda),i]$ are the Stirling numbers of first kind, 
the sum is over all partitions\footnote{Recall that a partition of an integer
$i$ is a sequence $\lambda = (\lambda_1, \lambda_2, \ldots,
\lambda_m),$ where $\lambda_j$ are weakly decreasing positive
integers such that $\sum_{j=1}^{m} \lambda_j = i.$ The integers
$\lambda_j$ are named {\it parts} of $\lambda.$ The {\it length}
of $\lambda$ is the number of its parts and will be indicated by
$l(\lambda).$  A different notation is $\lambda = (1^{r_1},
2^{r_2}, \ldots),$ where $r_j$ is the number of parts of $\lambda$
equal to $j$ and $r_1 + r_2 + \cdots = l(\lambda).$ Note that
$r_j$ is said to be the multiplicity of $j$. We use the classical
notation $\lambda \vdash i$ to denote \lq\lq $\lambda$ is a
partition of $i$\rq\rq.} $\lambda = (1^{r_1}, 2^{r_2},
\ldots)$ of the integer $k-j$ and $d_{\lambda} = i!/(r_1!
r_2! \cdots \, (1!)^{r_1}(2!)^{r_2} \cdots).$
\end{cor}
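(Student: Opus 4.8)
The plan is to reduce everything to Corollary~\ref{AST_coeff}\,{\it i)}, which already tells us that $Q_k(x,t)=\sum_{j=0}^k q_j^{(k)}(t)\,x^j$ with $q_j^{(k)}(t)=\binom{k}{j}E[(-t\punt\alpha)^{k-j}]$. So the whole task is to expand the single moment $E[(-t\punt\alpha)^{k-j}]$ as a polynomial in $t$ and read off the coefficient of $t^i$; the overall factor $\binom{k}{j}$ and the variable $x^j$ come for free.

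First I would invoke formula~(\ref{(1)}) with the integer $n$ replaced by $-t$ (legitimate by the discussion following~(\ref{(1)}) and by~(\ref{(inverse)})), obtaining
\begin{equation*}
E[(-t\punt\alpha)^{k-j}]=\sum_{\ell=1}^{k-j}(-t)_\ell\,B_{k-j,\ell}(a_1,a_2,\ldots,a_{k-j-\ell+1}),
\end{equation*}
with $(-t)_\ell$ the lower factorial and $B_{k-j,\ell}$ the exponential partial Bell polynomials; for $j=k$ the sum is empty and equals $1$, recovering $q_k^{(k)}(t)=1$. Then I would expand the two non-polynomial-in-$t$ ingredients combinatorially: the lower factorial via the Stirling numbers of the first kind, $(-t)_\ell=\sum_{i=0}^{\ell}s[\ell,i](-t)^i$ (signed convention, so that $(-t)^i=(-1)^i t^i$ and $(-1)^i=(-1)^{2l(\lambda)+i}$), which produces the powers $t^i$ together with the sign appearing in the statement; and the Bell polynomials via their classical representation over set partitions (see~\cite{Comtet}), $B_{n,\ell}(a_1,a_2,\ldots)=\sum_{\lambda\vdash n,\ l(\lambda)=\ell}{\rm d}_\lambda\,a_1^{r_1}a_2^{r_2}\cdots$, where $\lambda=(1^{r_1},2^{r_2},\ldots)$ runs over the partitions of $n$ with exactly $\ell$ parts and ${\rm d}_\lambda$ is precisely the multinomial coefficient in the statement.

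Substituting both expansions and interchanging the order of summation, the double sum over $\ell$ and over partitions of $k-j$ with $\ell$ parts collapses to a single sum over all partitions $\lambda\vdash k-j$, with $\ell$ replaced everywhere by $l(\lambda)$; collecting the coefficient of $t^i x^j$ then yields $c^{(k)}_{i,j}=\binom{k}{j}\sum_{\lambda\vdash k-j}{\rm d}_\lambda(-1)^{\,2l(\lambda)+i}\,s[l(\lambda),i]\,a_1^{r_1}a_2^{r_2}\cdots$, as claimed. I would also remark that the outer range $\sum_{j,i=0}^{k}$ formally includes pairs with $i+j>k$, but those contributions vanish automatically since $s[l(\lambda),i]=0$ whenever $i>l(\lambda)$ and $l(\lambda)\le k-j$.

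I do not expect a genuine obstacle here: the argument is the substitution of two standard identities into Corollary~\ref{AST_coeff}. The only point requiring care is the bookkeeping — keeping the index ranges of $\ell$, of $i$ and of the partitions mutually consistent so that the interchange of summations is valid, and reconciling the sign $(-1)^i$ coming from $(-t)^i$ with the exponent $2l(\lambda)+i$ written in the statement (which, since $(-1)^{2l(\lambda)}=1$, is a purely cosmetic rewriting). Everything else is routine.
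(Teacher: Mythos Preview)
Your argument is correct and matches the paper's own proof essentially step for step: start from Corollary~\ref{AST_coeff}\,{\it i)}, expand $E[(-t\punt\alpha)^{k-j}]$ via~(\ref{(1)}) as a partition sum, expand the lower factorial $(-t)_{l(\lambda)}$ through Stirling numbers of the first kind, and then invoke $s[l(\lambda),i]=0$ for $i>l(\lambda)$ to justify the outer range on~$i$. The only cosmetic difference is that the paper cites the partition form of~(\ref{(1)}) directly from~\cite{Bernoulli}, whereas you pass through the Bell polynomials $B_{k-j,\ell}$ and then unfold them into partitions, which amounts to the same computation.
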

\begin{proof}
Equation (\ref{(1)}), with $n$ replaced by $t,$ can be restated as
$E[(-t \punt \alpha)^i] = \sum_{\lambda \vdash i} {\rm d}_{\lambda} (t)_{l(\lambda)}
a_1^{r_1} a_2^{r_2} \cdots,$ see \cite{Bernoulli}. In particular we have $E[(- t  \punt \alpha)^{k-j}] = 
 \sum_{\lambda \vdash k-j} {\rm d}_{\lambda} \,
\left(\sum_{i=0}^{l(\lambda)} s[l(\lambda),i] \, (-1)^{i+2 l(\lambda)} \, t^i \right)
\, a_1^{r_1} a_2^{r_2} \cdots,$ where $s[l(\lambda),i]$ is the $i$-th Stirling number of the first kind.
From Corollary \ref{AST_coeff} we have
\begin{equation}
Q_k(x,t) = \sum_{j=0}^k  \binom{k}{j} \left( \sum_{\lambda \vdash k-j}
{\rm d}_{\lambda} \, a_1^{r_1} a_2^{r_2} \cdots \, \sum_{i=0}^{l(\lambda)} (-1)^{i+2 l(\lambda)} \, s[l(\lambda),i] \, t^i \right) x^j
\label{(5)}
\end{equation}
and the result follows by suitably rearranging the terms in (\ref{(5)}) and by observing that $s[l(\lambda),i] = 0$
for $i > l(\lambda).$
\end{proof}
In the following, assume $Q_k(x,t)$ in (\ref{(tshumbral)}) such that $Q_k(x,t)= \sum_{j=0}^k q_j^{(k)}(t) \, x^j$ and denote by $\{a_n\}$ the sequence of moments umbrally represented by the umbra $\alpha$ in (\ref{(tshumbral)}).
\begin{prop} \label{prop1}
We have $q_j^{(k)}(t-1) = \sum_{i=j}^k\binom{i}{j} q_i^{(k)}(t) \, a_{i-j}.$
\end{prop}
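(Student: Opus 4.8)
The plan is to relate the polynomial $Q_k(x,t-1)$ to $Q_k(x,t)$ by exploiting the distributive property \eqref{(distributive)} with $s=-1$, which gives $(t-1)\punt\alpha \equiv t\punt\alpha + (-1)\punt\alpha'$ for an umbra $\alpha'\equiv\alpha$. First I would start from the coefficient formula in Corollary \ref{AST_coeff}{\it i)}, namely $q_j^{(k)}(t-1) = \binom{k}{j}E[(-(t-1)\punt\alpha)^{k-j}]$, and rewrite the inverse umbra $-(t-1)\punt\alpha$ using the distributive property: since $-(t-1)\punt\alpha \equiv -t\punt\alpha + 1\punt\alpha'$ (with $\alpha'\equiv\alpha$ uncorrelated with the umbra carrying $-t\punt\alpha$), and $1\punt\alpha'\equiv\alpha'$, we get $E[(-(t-1)\punt\alpha)^{k-j}] = E[(-t\punt\alpha + \alpha')^{k-j}]$.

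Next I would expand the binomial: by the uncorrelation property of the evaluation $E$,
\begin{equation}
E[(-t\punt\alpha+\alpha')^{k-j}] = \sum_{m=0}^{k-j}\binom{k-j}{m} E[(-t\punt\alpha)^{k-j-m}]\, a_m,
\end{equation}
where $a_m = E[(\alpha')^m]$ since $\alpha'\equiv\alpha$. Multiplying by $\binom{k}{j}$ and recognizing $\binom{k}{j}E[(-t\punt\alpha)^{k-j-m}]$ in terms of the coefficients of $Q_k$, one reindexes with $i = j+m$ (so $m = i-j$ and $i$ ranges from $j$ to $k$). The combinatorial identity $\binom{k}{j}\binom{k-j}{i-j} = \binom{k}{i}\binom{i}{j}$ then converts $\binom{k}{j}\binom{k-j}{i-j}E[(-t\punt\alpha)^{k-i}]$ into $\binom{i}{j}\cdot\binom{k}{i}E[(-t\punt\alpha)^{k-i}] = \binom{i}{j} q_i^{(k)}(t)$, again by Corollary \ref{AST_coeff}{\it i)}. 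Collecting terms yields exactly $q_j^{(k)}(t-1) = \sum_{i=j}^{k}\binom{i}{j} q_i^{(k)}(t)\, a_{i-j}$.

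The only delicate point is the bookkeeping of which umbra the symbol $-t\punt\alpha$ refers to and making sure the freshly introduced $\alpha'$ is genuinely uncorrelated with it, so that the uncorrelation property in the evaluation applies; this is handled by the conventions on inverse umbrae recalled after \eqref{(inverse)} (the footnote allowing $-t\punt\alpha + t\punt\alpha' \equiv -t\punt\alpha + t\punt\alpha \equiv \eps$), and more generally by \eqref{(distributive)} read with a negative argument. Once that is granted, everything is a routine binomial expansion and reindexing, and no real obstacle remains. An alternative, essentially equivalent route is to argue at the level of generating functions via \eqref{(wald)}: $f(x-(t-1)\punt\alpha,z) = f(x-t\punt\alpha,z)\,f(\alpha,z)$, extract the coefficient of $z^k/k!$, and match powers of $x$; I would mention this as a remark but carry out the umbral computation as the main proof since it stays closest to the symbolic framework of the paper.
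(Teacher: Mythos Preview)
Your proof is correct and follows essentially the same path as the paper's: start from Corollary~\ref{AST_coeff}{\it i)}, use the distributive property to write $-(t-1)\punt\alpha \equiv -t\punt\alpha + \alpha'$, expand the binomial, reindex with $i=j+m$, and apply $\binom{k}{j}\binom{k-j}{i-j}=\binom{k}{i}\binom{i}{j}$. The only cosmetic difference is that the paper carries the computation at the level of umbral equivalences $\simeq$ and applies $E$ at the very end, while you apply $E$ from the outset; the substance is identical.
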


\begin{proof}
From Corollary \ref{AST_coeff}, we have
$q_j^{(k)}(t-1) \simeq \binom{k}{j}[-(t-1)\punt\alpha]^{k-j}\simeq\binom{k}{j}(-t\punt\alpha+\alpha^{\prime})^{k-j}
\simeq \binom{k}{j}\sum_{s=0}^{k-j}\binom{k-j}{s}(-t\punt\alpha)^{k-j-s}\alpha^s
\simeq \sum_{i=j}^k\binom{i}{j} \binom{k}{i} (-t\punt\alpha)^{k-i} \,  \alpha^{i-j}.$
The result follows by taking the evaluation $E$ of both sides. 
\end{proof}

\begin{prop}
We have $a_k = q_0^{(k)}(t-1)-\sum_{j=0}^{k-1} q_j^{(k)}(t) \, a_j. $
\end{prop}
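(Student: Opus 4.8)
The plan is to derive this identity directly from Proposition \ref{prop1} by specializing the free index $j$ to $0$. Setting $j=0$ in the formula $q_j^{(k)}(t-1) = \sum_{i=j}^k\binom{i}{j} q_i^{(k)}(t) \, a_{i-j}$ gives
\begin{equation*}
q_0^{(k)}(t-1) = \sum_{i=0}^k \binom{i}{0} q_i^{(k)}(t) \, a_{i} = \sum_{i=0}^k q_i^{(k)}(t) \, a_{i},
\end{equation*}
since $\binom{i}{0}=1$ for every $i$.

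Next I would split off the top term $i=k$ from the sum, writing $\sum_{i=0}^k q_i^{(k)}(t) \, a_{i} = \sum_{i=0}^{k-1} q_i^{(k)}(t) \, a_{i} + q_k^{(k)}(t) \, a_k$, and then invoke Corollary \ref{AST_coeff}, part {\it i)}, which states that $q_k^{(k)}(t)=1$ for $t>0$. Substituting this, we obtain $q_0^{(k)}(t-1) = \sum_{i=0}^{k-1} q_i^{(k)}(t) \, a_{i} + a_k$, and solving for $a_k$ yields the claimed identity after renaming the summation index $i$ as $j$.

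I do not expect any genuine obstacle here: the statement is an immediate corollary of Proposition \ref{prop1} together with the normalization $q_k^{(k)}(t)=1$. The only point deserving a word of care is the implicit assumption $t>0$ (so that Corollary \ref{AST_coeff} applies in the form $q_k^{(k)}(t)=1$), which is already the standing assumption on the family $\{Q_k(x,t)\}$ in this part of the paper; the case $t=0$ would have to be read off separately from part {\it ii)} of the same corollary, but it is consistent since then $q_0^{(k)}(0)=0$ for $k\ge 1$ forces the identity to reduce to $a_k = -\sum_{j=0}^{k-1} q_j^{(k)}(1)\,a_j$, which is exactly Proposition \ref{prop1} at $j=0$, $t=1$.
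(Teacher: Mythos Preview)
Your argument is correct and coincides with the paper's own proof: both specialize Proposition~\ref{prop1} at $j=0$, isolate the term $i=k$, and use $q_k^{(k)}(t)=1$ from Corollary~\ref{AST_coeff}. Your closing remark about the case $t=0$ is unnecessary (and slightly muddled---you seem to switch to $t=1$ midway), since part~\textit{ii)} of Corollary~\ref{AST_coeff} already gives $q_k^{(k)}(0)=1$, so the same computation goes through for all $t\geq 0$.
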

\begin{proof}
By using Proposition \ref{prop1}, we have $q_0^{(k)}(t-1) = a_k q_k^{(k)}(t) \, + \, \sum_{j=0}^{k-1} a_j q_j^{(k)}(t).$ We have
$q_0^{(k)}(t-1) - \sum_{j=0}^{k-1} a_j q_j^{(k)}(t) = a_k q_k^{(k)}(t) \, + \,
\sum_{j=0}^{k-1} a_j q_j^{(k)}(t) - \sum_{j=0}^{k-1} a_j q_j^{(k)}(t)
= a_k q_k^{(k)}(t).$ The result follows by observing that $q_k^{(k)}(t)=1.$ 
\end{proof}
\begin{thm} \label{comb}
A polynomial $P(x,t)=\sum_{j=0}^k p_j(t) \, x^j,$ of degree $k$ for all
$t \geq 0,$ is a time-space harmonic
polynomial with respect to $\{t \punt \alpha\}_{t \geq 0}  $ if and only if
\begin{equation}
p_j(t) = \sum_{i=j}^k  \binom{i}{j} \, p_i(0) \, E[(-t\punt\alpha)^{i-j}], \quad
\hbox{for $j=0,\ldots,k.$} \label{prop5}
\end{equation}
\end{thm}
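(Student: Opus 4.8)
The plan is to prove both implications by comparing $P(x,t)$ with a suitable linear combination of the time-space harmonic polynomials $Q_k(x,t)$ from Theorem~\ref{UTSH2}.

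\textbf{Sufficiency.} Suppose $p_j(t) = \sum_{i=j}^k \binom{i}{j} p_i(0) \, E[(-t\punt\alpha)^{i-j}]$ for $j=0,\ldots,k$. I would show directly that $P(x,t) = \sum_{i=0}^k p_i(0) \, Q_i(x,t)$. Indeed, using Corollary~\ref{AST_coeff}, the coefficient of $x^j$ in $\sum_{i=0}^k p_i(0)\,Q_i(x,t)$ equals $\sum_{i=j}^k p_i(0)\, q_j^{(i)}(t) = \sum_{i=j}^k p_i(0)\binom{i}{j} E[(-t\punt\alpha)^{i-j}]$, which is exactly $p_j(t)$. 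Since each $Q_i(x,t)$ is time-space harmonic with respect to $\{t\punt\alpha\}_{t\geq 0}$, and any linear combination of time-space harmonic polynomials is again time-space harmonic (as observed right after Proposition~\ref{Appell}), $P(x,t)$ is time-space harmonic.

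\textbf{Necessity.} Suppose $P(x,t) = \sum_{j=0}^k p_j(t)\,x^j$ is time-space harmonic with respect to $\{t\punt\alpha\}_{t\geq0}$. Set $c_i := p_i(0)$ and consider the polynomial $\widetilde{P}(x,t) := \sum_{i=0}^k c_i\, Q_i(x,t)$. By the sufficiency direction, $\widetilde{P}$ is time-space harmonic with respect to the same family, and by Corollary~\ref{AST_coeff}, $\widetilde{P}(x,0) = \sum_i c_i\, x^i = \sum_i p_i(0)\, x^i = P(x,0)$. Now consider $D(x,t) := P(x,t) - \widetilde{P}(x,t)$; it is time-space harmonic with respect to $\{t\punt\alpha\}$ and satisfies $D(x,0) = 0$. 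The key step is to show any such $D$ vanishes identically, i.e. $D(x,t)=0$ for all $t\geq0$. For this I would use Definition~\ref{condeval} with $s=0$: writing $0\punt\alpha \equiv \eps$ with $E[\eps^m]=\delta_{0,m}$, the time-space harmonic property gives $E\left(D(t\punt\alpha,t)\,\vline\, 0\punt\alpha\right) = D(0\punt\alpha,0)$. Since $0\punt\alpha\equiv\eps$ and $D(x,0)=0$, the right-hand side is $0$, while by Proposition~\ref{Propunc} (as $0\punt\alpha\equiv\eps$ has no umbrae in common with the fresh copies used to form $t\punt\alpha$, or directly by (\ref{inverseII})) the left-hand side reduces to $E[D(t\punt\alpha,t)]$. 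Writing $D(x,t)=\sum_j d_j(t)x^j$, this says $\sum_j d_j(t)\, E[(t\punt\alpha)^j] = 0$ for all $t\geq0$ — a single scalar identity, which is not by itself enough.

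To get that each $d_j(t)\equiv0$, I would instead argue recursively on $j$ using a sharper consequence of the time-space harmonic property, namely the conditional evaluation with respect to $s\punt\alpha$ treated as an indeterminate (Definition~\ref{condeval1}~{\it ii})): expanding $E\left(D(t\punt\alpha,t)\,\vline\, s\punt\alpha\right)$ via (\ref{inverseII}) produces a polynomial in the ``indeterminate'' $s\punt\alpha$ whose coefficients must match those of $D(s\punt\alpha,s)$. Comparing the top-degree coefficient in $s\punt\alpha$ and using $q_k^{(k)}\equiv1$ forces $d_k(t)=d_k(s)$; combined with $d_k(0)=0$ this gives $d_k\equiv0$, and then one descends in $j$. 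Once $D\equiv 0$ we conclude $p_j(t) = \sum_{i=j}^k \binom{i}{j} p_i(0)\, E[(-t\punt\alpha)^{i-j}]$ from Corollary~\ref{AST_coeff} applied to $\widetilde P = P$. I expect the main obstacle to be making precise the uniqueness argument that $D(x,0)=0$ together with the time-space harmonic property forces $D\equiv0$; the cleanest route is the coefficient-matching in the indeterminate $s\punt\alpha$ just described, which relies essentially on the invertibility of the triangular system with $1$'s on the diagonal coming from Corollary~\ref{AST_coeff}~{\it ii}).
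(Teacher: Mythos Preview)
Your sufficiency direction is exactly the paper's: both rewrite the double sum to recognize $P(x,t)=\sum_{i=0}^k p_i(0)\,Q_i(x,t)$ and then invoke that linear combinations of the $Q_i$ are time-space harmonic.

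For necessity the two arguments diverge. The paper simply \emph{asserts} that any time-space harmonic $P(x,t)$ of degree $k$ has the form $\sum_{i=0}^k c_i\,E[(x-t\punt\alpha)^i]$ for some real constants $c_i$, and then identifies $c_i=p_i(0)$ by evaluating at $t=0$. Your approach actually proves this representation: you build $\widetilde P(x,t)=\sum_i p_i(0)Q_i(x,t)$, note $P$ and $\widetilde P$ agree at $t=0$, and then show the time-space harmonic difference $D=P-\widetilde P$ vanishes by matching coefficients of powers of $s\punt\alpha$ (treated as an indeterminate, per Definition~\ref{condeval1}) in the identity $E\!\left(D(t\punt\alpha,t)\,\big|\,s\punt\alpha\right)=D(s\punt\alpha,s)$, descending from the top degree. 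That recursion is correct: the coefficient of $(s\punt\alpha)^k$ on the left is $d_k(t)$ and on the right is $d_k(s)$, forcing $d_k$ constant and hence zero; once $d_k\equiv 0$ the same comparison at level $k-1$ gives $d_{k-1}(t)=d_{k-1}(s)$, and so on. This is exactly the ``invertibility of the triangular system'' you mention, and it supplies the step the paper takes for granted. In short, your necessity argument is a genuine strengthening of the paper's, at the cost of a little more work; the paper's version is shorter but leans on an unproved claim.
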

\begin{proof}
Assume $P(x,t)=\sum_{j=0}^k p_j(t) \, x^j$ a polynomial whose coefficients
satisfy (\ref{prop5}). Then we have
\begin{equation}
\sum_{j=0}^k \left\{ \sum_{i=j}^k \binom{i}{j} \, p_i(0) \, E[(-t\punt\alpha)^{i-j}] \right\} \, x^j
= \sum_{j=0}^k p_j(0) \sum_{i=0}^j \binom{j}{i} \, x^i \, E[(-t\punt\alpha)^{j-i}]
= \sum_{j=0}^k p_j(0) \, E[(x-t\punt\alpha)^{j}].
\label{(series)}
\end{equation}
As $P(x,t)$ is a linear combination of $\{Q_k(x,t)\},$ then $P(x,t)$ is a time-space 
harmonic polynomial with respect to $\{t \punt \alpha\}_{t \geq 0}  .$ Viceversa if $P(x,t)=\sum_{j=0}^k p_j(t) \, x^j$ is a time-space harmonic polynomial with respect to $\{t \punt \alpha\}_{t \geq 0}  ,$ then
$P(x,t) = \sum_{i=0}^k c_i E[(x - t \punt \alpha)^i],$ with $\{c_i\} \in {\mathbb R}.$
Therefore, from (\ref{(series)}) and for $j=0,\ldots,k$ we have $p_j(t) = \sum_{i=j}^k
\binom{i}{j} c_i E[(-t\punt\alpha)^{i-j}].$ So (\ref{prop5}) follows by observing that, when $t$
is replaced by $0,$ we have $p_j(0) = \sum_{i=j}^k \binom{i}{j} \,c_i \, E[(-0\punt\alpha)^{i-j}]
= \sum_{i=j}^k \binom{i}{j} \, c_i \, E[\eps^{i-j}] = c_j.$
\end{proof}
\begin{cor} \label{propfin}
If $P(x,t)=\sum_{j=0}^k p_j(t) \, x^j$ is a polynomial of degree $k$ for all
$t \geq 0,$ then there exists an umbra $\alpha$
such that $P(x,t)$ is a time-space harmonic polynomial with respect to $\{t \punt \alpha\}_{t \geq 0}.$
\end{cor}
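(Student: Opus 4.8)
The plan is to construct the umbra $\alpha$ directly from the coefficients of $P(x,t)$ by reading off the moments it must have, and then to verify via Theorem~\ref{comb} that $P$ is indeed time-space harmonic with respect to $\{t\punt\alpha\}_{t\geq0}$. First I would set $c_j := p_j(0)$ for $j=0,\ldots,k$, so that $c_k \neq 0$ since $P$ has degree $k$ for all $t\geq0$ (in particular at $t=0$). The natural candidate is the polynomial $R(x) := P(x,0) = \sum_{j=0}^k c_j x^j$, and the idea is that $P(x,t)$ should coincide with $\sum_{j=0}^k c_j\,E[(x-t\punt\alpha)^j]$ once $\alpha$ is chosen correctly; by the computation in~(\ref{(series)}) inside the proof of Theorem~\ref{comb}, this latter sum has $x^j$-coefficient exactly $\sum_{i=j}^k \binom{i}{j} c_i E[(-t\punt\alpha)^{i-j}]$, which is the right-hand side of~(\ref{prop5}).

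The core of the argument is therefore to produce an umbra $\alpha$ with prescribed moments so that the identity~(\ref{prop5}) holds for the given coefficient functions $p_j(t)$. Writing $e_m(t) := E[(-t\punt\alpha)^m]$, equation~(\ref{prop5}) for $j=k-1$ reads $p_{k-1}(t) = c_{k-1} + \binom{k}{k-1} c_k\, e_1(t)$, which forces $e_1(t) = \big(p_{k-1}(t)-c_{k-1}\big)/(k c_k)$; continuing downward through $j=k-2,k-3,\ldots,0$, each step determines $e_{k-j}(t)$ as an explicit expression in the already-determined $e_1(t),\ldots,e_{k-j-1}(t)$ and the $p_i(t)$'s, because the coefficient of $e_{k-j}(t)$ in~(\ref{prop5}) is $\binom{k}{j}c_k \neq 0$. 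This pins down $e_1(t),\ldots,e_k(t)$ uniquely as functions of $t$. Now $e_m(t) = E[(-t\punt\alpha)^m]$ must, by~(\ref{(1)}), be a polynomial in $t$ of the form $\sum_{\lambda\vdash m} {\rm d}_\lambda (-t)_{l(\lambda)} a_1^{r_1}a_2^{r_2}\cdots$; inverting this triangular (in the $a_i$) relation recovers the moments $a_1,a_2,\ldots,a_k$ of the desired umbra $\alpha$ from the values $e_1(t),\ldots,e_k(t)$, after which we simply let $\alpha$ be any umbra representing $1,a_1,a_2,\ldots,a_k,\ldots$ (moments of order $>k$ chosen arbitrarily, e.g.\ zero).

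The one genuine obstacle is consistency: we must check that the functions $e_m(t)$ obtained by solving~(\ref{prop5}) are actually realizable as $E[(-t\punt\alpha)^m]$ for a single fixed umbra $\alpha$ and all $t\geq0$ simultaneously — i.e.\ that the $a_i$ extracted at one value of $t$ agree with those extracted at another, equivalently that each $e_m(t)$ is a polynomial in $t$ of the correct shape. This is where the hypothesis that $P(x,t)$ has degree exactly $k$ for \emph{all} $t\geq0$, together with $P$ being polynomial in $t$, does the work: it guarantees the $p_j(t)$ are polynomials and that the triangular back-substitution never divides by something vanishing, so the $e_m(t)$ are polynomials of degree $\leq m$ in $t$ with $e_m(0)=0$, which is precisely the structural form~(\ref{(1)}) with $n$ replaced by $t$ produces. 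Granting this, $\alpha$ is well defined, $P(x,t) = \sum_{j=0}^k c_j E[(x-t\punt\alpha)^j]$ is a linear combination of the $Q_j(x,t)$, and hence — by the remark following Corollary~\ref{AST_coeff} and by Theorem~\ref{comb} — $P(x,t)$ is time-space harmonic with respect to $\{t\punt\alpha\}_{t\geq0}$, completing the proof.
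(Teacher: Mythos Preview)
The paper supplies no proof of this corollary; it is simply recorded after Theorem~\ref{comb}. So there is nothing to compare your argument to, and your proposal must stand or fall on its own.

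Your outline is the natural one: solve~(\ref{prop5}) by descending back-substitution to determine $e_m(t):=E[(-t\punt\alpha)^m]$ for $m=1,\dots,k$, then read off the moments $a_1,\dots,a_k$ of $\alpha$. You correctly flag the real issue---consistency---but the paragraph you offer to resolve it does not hold up. First, you invoke ``$P$ being polynomial in $t$,'' which is not part of the hypothesis. Second, even granting that, the conclusion you draw (``the $e_m(t)$ are polynomials of degree $\le m$ in $t$ with $e_m(0)=0$, which is precisely the structural form~(\ref{(1)}) produces'') is false on both counts. Take $k=2$ and $P(x,t)=x^2+t^3$: then $c_2=1$, $c_1=c_0=0$, and your back-substitution gives $e_1(t)=0$, $e_2(t)=t^3$, which already has degree $3>2$. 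More subtly, even when $e_m(t)$ \emph{is} a polynomial of degree $\le m$ vanishing at $0$, that alone does not make it realizable as $E[(-t\punt\alpha)^m]$: by~(\ref{(1)}) the $t^m$-coefficient of $E[(-t\punt\alpha)^m]$ is $(-a_1)^m$, so once $e_1(t)=-a_1 t$ fixes $a_1$, the leading coefficient of $e_2(t)$ is forced to be $a_1^2$, and so on up the tower. Your triangular system determines the $e_m(t)$, but nothing in the hypotheses forces these compatibility constraints to be met.

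In short, the statement as printed is too strong to admit the argument you sketch (and, on the face of it, too strong to be literally true without further hypotheses on the $p_j(t)$). If you want a correct version, you would need to assume from the outset that the $p_j(t)$ already satisfy~(\ref{prop5}) for \emph{some} sequence of values $E[(-t\punt\alpha)^{i-j}]$ coming from a genuine umbra---but then the corollary collapses into a restatement of one direction of Theorem~\ref{comb}.
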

%------------------------------------------------------------------------------------------
\section{Applications and examples}
%------------------------------------------------------------------------------------------
\subsection{Discrete Case}
When the parameter $t$ is replaced by a nonnegative integer $n,$
the coefficients $q_j^{(k)}(n)$ of time-space harmonic polynomials
satisfy further properties thanks to the umbral re\-pre\-sen\-ta\-tion (\ref{(tshumbral)}).
To keep the length of the paper within bounds, we just show some of them. 
\begin{prop}
We have $q_j^{(k)}(n)+\sum_{i=j+1}^k\binom{i}{j}\sum_{l=1}^n q_i^{(k)}(l) \, a_{i-j} = 0.$
\end{prop}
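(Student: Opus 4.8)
The plan is to derive the identity from Proposition~\ref{prop1} by a telescoping argument over the discrete parameter. First I would apply Proposition~\ref{prop1}, which states $q_j^{(k)}(t-1) = \sum_{i=j}^k\binom{i}{j} q_i^{(k)}(t) \, a_{i-j}$, and isolate the $i=j$ term using $q_j^{(k)}(t)=1$ (Corollary~\ref{AST_coeff}) together with $a_0=1$. This rewrites the recursion as
\begin{equation*}
q_j^{(k)}(t-1) - q_j^{(k)}(t) = \sum_{i=j+1}^k\binom{i}{j} q_i^{(k)}(t) \, a_{i-j}.
\end{equation*}

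Next I would specialize $t$ to an integer $l$ and sum both sides over $l=1,\ldots,n$. The left-hand side telescopes to $q_j^{(k)}(0) - q_j^{(k)}(n)$, and by Corollary~\ref{AST_coeff}, part {\it ii)}, we have $q_j^{(k)}(0)=0$ for $j=0,1,\ldots,k-1$ (and the case $j=k$ is trivial since both $q_k^{(k)}(0)=q_k^{(k)}(n)=1$ make the statement $0=0$). Thus the left side becomes $-q_j^{(k)}(n)$, while the right side becomes $\sum_{i=j+1}^k\binom{i}{j}\sum_{l=1}^n q_i^{(k)}(l)\,a_{i-j}$. Rearranging gives exactly
\begin{equation*}
q_j^{(k)}(n)+\sum_{i=j+1}^k\binom{i}{j}\sum_{l=1}^n q_i^{(k)}(l) \, a_{i-j} = 0,
\end{equation*}
which is the claim.

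The argument is essentially a bookkeeping exercise, so I do not anticipate a serious obstacle; the only point requiring a little care is the interchange/reindexing of the double sum and confirming that the $i=j$ contribution is correctly peeled off — in particular that Proposition~\ref{prop1} is being read with the right index ranges (the term $i=j$ contributes $\binom{j}{j}q_j^{(k)}(t)a_0 = q_j^{(k)}(t)$). One should also note that Proposition~\ref{prop1} was stated for real $t$, so its specialization to integer arguments $t=l$ is immediate. I would present the proof in two or three lines: invoke Proposition~\ref{prop1}, peel off the leading term, sum over $l$, telescope, and apply Corollary~\ref{AST_coeff}~{\it ii)}.
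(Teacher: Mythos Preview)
Your argument is correct and follows the same telescoping route as the paper: the paper performs the step $\sum_{i=j+1}^k\binom{i}{j}a_{i-j}q_i^{(k)}(l)=q_j^{(k)}(l-1)-q_j^{(k)}(l)$ by a direct umbral computation, whereas you obtain it by invoking Proposition~\ref{prop1} and peeling off the $i=j$ term, but the content is identical. One small slip to fix: you write ``using $q_j^{(k)}(t)=1$'', which is false for $j<k$; what you actually need (and correctly use a few lines later) is just $\binom{j}{j}a_0=1$, so that the $i=j$ term contributes $q_j^{(k)}(t)$.
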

\begin{proof}
From Corollary \ref{AST_coeff}
$\sum_{i=j+1}^k\binom{i}{j}\sum_{l=1}^n\alpha^{i-j} q_i^{(k)}(l)
        \simeq\sum_{l=1}^n\sum_{i=j+1}^k\binom{i}{j}\alpha^{i-j}\binom{k}{i}(-l\punt\alpha)^{k-i}.$
Since
$\sum_{i=j+1}^k\binom{i}{j}\alpha^{i-j}\binom{k}{i}(-l\punt\alpha)^{k-i} \simeq \binom{k}{j}
\left[(-l\punt\alpha + \alpha^{\prime})^{k-j}-(-l\punt\alpha)^{k-j}\right]\simeq q_j^{(k)}(l-1)- q_j^{(k)}(l),$
the result follows by observing that
$ \sum_{i=j+1}^k\binom{i}{j}\sum_{l=1}^n\alpha^{i-j} q_i^{(k)}(l) \simeq \sum_{l=1}^n\left( q_j^{(k)}(l-1)- q_j^{(k)}(l)\right)
\simeq q_j^{(k)}(0) - q_j^{(k)}(n) \simeq - q_j^{(k)}(n),$
since $q_j^{(k)}(0) \simeq \binom{k}{j} \, (-0\punt\alpha)^{k-j} \simeq 0$, for all $k\neq j.$
\end{proof}
\begin{cor}
We have $q_0^{(k)}(n)+\sum_{l=1}^n\sum_{j=1}^k a_j \, q_j^{(k)}(l) = 0.$
\end{cor}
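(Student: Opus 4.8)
The plan is to specialize the identity from the immediately preceding Proposition, namely $q_j^{(k)}(n)+\sum_{i=j+1}^k\binom{i}{j}\sum_{l=1}^n q_i^{(k)}(l)\,a_{i-j}=0$, to the case $j=0$. First I would set $j=0$ throughout, so that $\binom{i}{0}=1$ and the double sum becomes $\sum_{i=1}^k\sum_{l=1}^n q_i^{(k)}(l)\,a_i$. After interchanging the order of summation (both sums are finite, so this is immediate), this is exactly $\sum_{l=1}^n\sum_{i=1}^k a_i\,q_i^{(k)}(l)$, which upon relabelling the index $i$ as $j$ yields $\sum_{l=1}^n\sum_{j=1}^k a_j\,q_j^{(k)}(l)$. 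Combining with the leftover term $q_0^{(k)}(n)$ gives the claimed identity $q_0^{(k)}(n)+\sum_{l=1}^n\sum_{j=1}^k a_j\,q_j^{(k)}(l)=0$.

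Concretely, the proof is a one-line deduction: by the preceding Proposition with $j=0$,
\begin{equation*}
q_0^{(k)}(n)+\sum_{i=1}^k\sum_{l=1}^n q_i^{(k)}(l)\,a_i=0,
\end{equation*}
and interchanging the two (finite) sums and renaming $i$ as $j$ gives the assertion. There is essentially no obstacle here; the only thing to be careful about is the index bookkeeping, namely that the inner sum starts at $j=1$ (not $j=0$), which is exactly what the parent Proposition delivers since its sum runs over $i=j+1,\ldots,k$ and $j=0$ turns this into $i=1,\ldots,k$. One could alternatively give a self-contained umbral derivation mirroring the proof of the parent Proposition — starting from $\sum_{i=1}^k\sum_{l=1}^n\alpha^{i}q_i^{(k)}(l)\simeq\sum_{l=1}^n\big(q_0^{(k)}(l-1)-q_0^{(k)}(l)\big)$ and telescoping — but reusing the already-proven Proposition is cleaner and shorter.

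So the only real content is recognizing that the Corollary is the $j=0$ instance of the Proposition; the rest is formal. I expect no genuine difficulty, and the write-up will be two or three lines.
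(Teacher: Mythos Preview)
Your proposal is correct and is exactly the intended argument: the paper gives no separate proof for this Corollary, so it is meant to be read as the $j=0$ specialization of the preceding Proposition, followed by the trivial interchange of the two finite sums and the relabelling $i\mapsto j$. Your index bookkeeping is accurate.
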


\begin{rem}
{\rm Let us observe that the family of umbrae $\{n \punt \alpha\}_{n\geq 0}$ corresponds to a discrete martingale
$\{X_n\}_{n \geq 0}$ with $X_0=0$ and independent and identically distributed  difference sequence with zero mean.
Recall that the difference sequence associated to $\{X_n\}_{n \geq 0}$ is a sequence of random variables $\{M_n\}_{n \geq 0}$ such that
$M_0=X_0=0$ and $M_n=X_n-X_{n-1},$ for all nonnegative integers $n.$
The umbra $n \punt \alpha$ generalizes $X_n=M_1+M_2+\cdots+M_n.$
Suppose to remove the identical distribution hypothesis on $\{M_n\}_{n \geq 0}:$ in umbral terms, the martingale
$\{X_n\}_{n \geq 0}$ corresponds to the umbra $\alpha_1+\alpha_2+\cdots+\alpha_n,$ where the umbrae
$\{\alpha_1, \ldots, \alpha_n\}$ are not necessarily similar.
The time-space harmonic polynomials $E[(x-n \punt \alpha)^k]$ need to be replaced by $E[\left(x- 1 \punt (\alpha_1+\alpha_2+\cdots+\alpha_n)\right)^k].$ The properties stated up to now can be recovered by similar arguments.}
\end{rem}
%---------------------------------------------------------------------
\subsection{Cumulants}
%---------------------------------------------------------------------
Any umbra is a partition umbra (cf. \cite{Dinardoeurop}). This means that 
if $\{a_n\}_{n \geq 0}$ is a sequence umbrally represented by an umbra $\alpha,$
then there exists a sequence $\{h_n\}_{n \geq 1}$ umbrally represented by an umbra
$\kappa_{{\scriptscriptstyle \alpha}},$ such that $\alpha \equiv \beta \punt
\kappa_{{\scriptscriptstyle \alpha}}.$ In terms of generating functions we have $f(\alpha,z)=
\exp[f(\kappa_{{\scriptscriptstyle \alpha}},z)-1],$ so that $\{h_n\}_{n \geq 1}$
is the sequence of formal cumulants of $\{a_n\}_{n \geq 0}$\footnote{In the ring of formal power series, given a sequence $\{a_n\}_{n \geq 1},$
its sequence of formal cumulants $\{h_n\}_{n \geq 1}$ is such that $1+ \sum_{n\geq 1} a_n z^n / n! = \exp \left(
\sum_{n\geq 1} h_n z^n / n!\right).$}. The umbra $\kappa_{{\scriptscriptstyle \alpha}}$ is called $\alpha$-{\it cumulant} umbra
and we also have $\kappa_{{\scriptscriptstyle \alpha}} \equiv \chi \punt \alpha,$
where $\chi$ is the singleton umbra.
\begin{prop}
For the sequence of polynomials $\{Q_k(x,t)\}$ umbrally represented by the time-space harmonic polynomial
umbra $x - t \punt \alpha,$ we have 
\begin{equation}
Q_k(x,t)=Y_k(x+h_1, h_2, \ldots, h_k),
\label{(complBell)}
\end{equation}
with $Y_k$ exponential complete Bell polynomials and $\{h_n\}$ the sequence of cumulants of $-t \punt \alpha.$
\end{prop}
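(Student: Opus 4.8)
The plan is to express both sides of \eqref{(complBell)} via generating functions and recognize them as identical formal power series. First I would recall the exponential formula for complete Bell polynomials: for a sequence $\{g_n\}_{n\geq 1}$ one has $\sum_{k\geq 0} Y_k(g_1,g_2,\ldots,g_k)\,z^k/k! = \exp\bigl(\sum_{n\geq 1} g_n z^n/n!\bigr)$. Applying this with $g_1 = x+h_1$ and $g_n = h_n$ for $n\geq 2$, where $\{h_n\}$ is the cumulant sequence of $-t\punt\alpha$, the right-hand side of \eqref{(complBell)} has generating function
\begin{equation}
\sum_{k\geq 0} Y_k(x+h_1,h_2,\ldots,h_k)\,\frac{z^k}{k!}
= \exp\Bigl(xz + \sum_{n\geq 1} h_n \frac{z^n}{n!}\Bigr)
= \exp\{xz\}\,\exp\Bigl(\sum_{n\geq 1} h_n \frac{z^n}{n!}\Bigr).
\end{equation}

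Next I would identify the second factor. By the definition of the cumulant umbra recalled just above the statement, $-t\punt\alpha \equiv \beta\punt\kappa$ where $\kappa$ is the $(-t\punt\alpha)$-cumulant umbra carrying the sequence $\{h_n\}$, and in terms of generating functions $f(-t\punt\alpha,z) = \exp\bigl(f(\kappa,z)-1\bigr) = \exp\bigl(\sum_{n\geq 1} h_n z^n/n!\bigr)$. Hence $\exp\bigl(\sum_{n\geq 1} h_n z^n/n!\bigr) = f(-t\punt\alpha,z) = f(\alpha,z)^{-t}$, using the generating-function identity for the dot-product $t\punt\alpha$ (so that $f(-t\punt\alpha,z)=f(t\punt\alpha,z)^{-1}=f(\alpha,z)^{-t}$). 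Therefore the generating function of the right-hand side of \eqref{(complBell)} equals $\exp\{xz\}/f(\alpha,z)^t$, which by \eqref{(wald)} is precisely $f(x-t\punt\alpha,z) = \sum_{k\geq 0} Q_k(x,t)\,z^k/k!$. Equating coefficients of $z^k/k!$ on both sides yields \eqref{(complBell)}.

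An alternative, purely umbral route avoids generating functions: since $x - t\punt\alpha$ is the Appell umbra of $-t\punt\alpha$ (Proposition \ref{Appell}), writing $x - t\punt\alpha \equiv x + (-t\punt\alpha)$ with $-t\punt\alpha \equiv \beta\punt\kappa$, one has $Q_k(x,t) \simeq (x + \beta\punt\kappa)^k$. Expanding by the binomial theorem and using the known moment expansion of the partition umbra $\beta\punt\kappa$ in terms of Bell polynomials in the moments $h_n$ of $\kappa$, together with the additivity $Y_k(g_1+c, g_2,\ldots) $ absorbing the shift $x$ into the first argument, reproduces $Y_k(x+h_1,h_2,\ldots,h_k)$. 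I expect the main obstacle to be a bookkeeping one rather than a conceptual one: one must be careful that $\{h_n\}$ denotes the cumulants of $-t\punt\alpha$ (so they depend on $t$) and not those of $\alpha$, and that the shift by $x$ enters only the first slot of the complete Bell polynomial; getting the placement of $x$ right in $Y_k(x+h_1,h_2,\ldots,h_k)$ is where a careless argument would slip. The generating-function approach handles this transparently because the $xz$ term sits cleanly in the exponent, so that is the route I would write up.
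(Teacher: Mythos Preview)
Your generating-function argument is correct and complete: the exponential formula for complete Bell polynomials together with the identity $f(-t\punt\alpha,z)=\exp\bigl(\sum_{n\geq 1}h_n z^n/n!\bigr)=f(\alpha,z)^{-t}$ and equation \eqref{(wald)} gives exactly the claimed equality of coefficients.

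The paper, however, does not pass through generating functions. Its proof is purely umbral: it uses the fact that $E[(\beta\punt\gamma)^k]=Y_k(g_1,\ldots,g_k)$ and then exhibits a single auxiliary umbra $\gamma$ with $x-t\punt\alpha\equiv\beta\punt\gamma$. The trick is to recognize $x$ itself as $\beta\punt\kappa_{(x\punt u)}$, where $\kappa_{(x\punt u)}$ has first moment $x$ and all higher moments zero, and then to use $\beta\punt\kappa_{(x\punt u)}+\beta\punt\kappa_{(-t\punt\alpha)}\equiv\beta\punt(\kappa_{(x\punt u)}\,\dot{+}\,\kappa_{(-t\punt\alpha)})$. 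The disjoint sum on the right has moments $x+h_1,h_2,h_3,\ldots$, which is exactly the argument list of $Y_k$. Your sketched ``alternative umbral route'' heads in this direction but stops short of the key device (the disjoint sum) that makes the shift of $x$ into the first slot automatic; the paper's construction handles that bookkeeping issue you flagged without any binomial expansion. What your approach buys is transparency and independence from the disjoint-sum machinery; what the paper's approach buys is that it stays entirely inside the symbolic calculus and explains structurally why only the first argument of $Y_k$ is shifted.
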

\begin{proof}
We have $E[(\beta \punt \gamma)^k]= Y_k(g_1, g_2, \ldots, g_k)$ with $g_n = E[\gamma^n],$ for all nonnegative integers $n$ \cite{Dinardoeurop}. Therefore we will prove (\ref{(complBell)}), if we show that $x + \punt (-t \punt \alpha) \equiv \beta \punt \gamma$ for some polynomial umbra $\gamma.$ Choose as umbra $\gamma$ 
the umbra $\kappa_{(x \punt u)} \dot{+} \kappa_{(-t \punt \alpha)}$ with $\kappa_{(x \punt u)}$ the cumulant umbra of $x \punt u$ and $\kappa_{(-t \punt \alpha)}$ the cumulant umbra of $- t \punt \alpha.$ 
Let us recall that the disjoint sum of two distinct umbrae $\delta_1 \dot{+} \delta_2$ is an auxiliary umbra whose $n$-th moment is $E[\delta_1^n] + E[\delta_2^n]$ for all integers $n \geq 1.$ We have     
$$E[(\kappa_{(x \punt u)} \dot{+} \kappa_{(-t \punt \alpha)})^n] = \left\{ \begin{array}{ll}
x + h_1 & n=1 \\
h_n & n > 1
\end{array} \right.$$
with $\{h_n\}$ the sequence of cumulants of $-t \punt \alpha.$ The result follows since  $x + t \punt (-1 \punt \alpha) \equiv \beta \punt \kappa_{(x \punt u)} + \beta \punt \kappa_{(-t \punt \alpha}) \equiv \beta \punt (\kappa_{(x \punt u)} \dot{+} \kappa_{(-t \punt \alpha)}),$ \cite{Sheffer}.
\end{proof}

\begin{rem} \label{cumlev}
{\rm As $f(\beta \punt \kappa_{{\scriptscriptstyle \alpha}},z)=
\exp[f(\kappa_{{\scriptscriptstyle \alpha}},z)-1],$ the auxiliary umbra
$\beta \punt \kappa_{{\scriptscriptstyle \alpha}}$ is the umbral counterpart
of a compound Poisson random variable with parameter $1.$ More in general, compound
Poisson random variables of parameter $t$ are represented by the auxiliary
umbrae $t \punt \beta \punt \kappa_{{\scriptscriptstyle \alpha}},$
with generating function $f(t \punt \beta \punt \kappa_{{\scriptscriptstyle \alpha}},z)=\exp[t(f(\kappa_{{\scriptscriptstyle \alpha}},z)-1)]$
and we have $t \punt \alpha \equiv t \punt \beta \punt \kappa_{{\scriptscriptstyle \alpha}}.$ 

Let $\{X_t\}_{t \geq 0}$ be a real-value L\'evy process,
i.e. a process starting from $0$ and with stationary and independent increments.
If we denote the moment generating function of $X_{t+s}-X_s$ by $\phi(z,t),$ then
$\phi(z,t)$ is infinitely divisible \cite{sato} and $\phi(z,t) = (\phi(z,1))^t = (\psi(z))^t,$
where $\psi(z)$ is the moment generating function of $X_1.$ In particular,
$\phi(z,t) = \exp[t\log\psi(z)] = \exp[t k(z)],$ where $k(z)$ is the cumulant
generating function of $X_1$ such that $k(0)=0.$
Comparing $\exp[t k(z)]$ with $\exp[t(f(\kappa_{{\scriptscriptstyle \alpha}},z)-1)],$
the correspondence between $t \punt \alpha \equiv t \punt \beta \punt \kappa_{{\scriptscriptstyle \alpha}}$ 
and the L\'evy process $\{X_t\}_{t \geq 0}$ is immediate. We call $t \punt \alpha$  the 
{\it L\'evy umbra} associated to the umbra $\alpha.$ } 
\end{rem}

The following theorem explicitly states the connection between time-space harmonic polynomials
with respect to L\'evy processes $\{X_t\}_{t \geq 0}$ and the sequence of cumulants of $X_1.$
\begin{thm} \label{AST_Levy1}
For all nonnegative integers $k,$ the family of polynomials
\begin{equation}
Q_k(x,t)=E[(x - t \punt \beta \punt \kappa_{{\scriptscriptstyle \alpha}})^k] \in {\mathbb R}[x]
\label{(tshumbral1)}
\end{equation}
is time-space harmonic with respect to $\{t \punt \alpha\}_{t \geq 0},$ where $\kappa_{{\scriptscriptstyle \alpha}}$ is the $\alpha$-cumulant umbra. 
\end{thm}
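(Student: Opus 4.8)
The plan is to reduce this statement to Theorem \ref{UTSH2} by exploiting the umbral identity $t \punt \alpha \equiv t \punt \beta \punt \kappa_{{\scriptscriptstyle \alpha}}$ recalled in Remark \ref{cumlev}. Once one knows that $t \punt \alpha$ and $t \punt \beta \punt \kappa_{{\scriptscriptstyle \alpha}}$ are similar umbrae, every moment computation that enters the proof of Theorem \ref{UTSH2} goes through verbatim with $\alpha$ replaced by $\beta \punt \kappa_{{\scriptscriptstyle \alpha}}$, so that the polynomials in (\ref{(tshumbral1)}) are literally the polynomials $Q_k(x,t)$ of (\ref{(tshumbral)}) attached to the umbra $\beta \punt \kappa_{{\scriptscriptstyle \alpha}}$, and the time-space harmonic property is with respect to the family $\{t \punt \beta \punt \kappa_{{\scriptscriptstyle \alpha}}\}_{t \geq 0} = \{t \punt \alpha\}_{t \geq 0}$.

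More concretely, I would first observe that $-t \punt \beta \punt \kappa_{{\scriptscriptstyle \alpha}} \equiv -t \punt \alpha$ as well, since the inverse umbra is determined up to similarity by the distributive property (\ref{(distributive)}) and property (\ref{(inverse)}). Hence the coefficients $q_j^{(k)}(t) = \binom{k}{j} E[(-t \punt \beta \punt \kappa_{{\scriptscriptstyle \alpha}})^j] = \binom{k}{j} E[(-t \punt \alpha)^j]$ are unchanged, and (\ref{(tshumbral1)}) coincides with (\ref{(tshumbral)}). Next, the conditional evaluation formula (\ref{inverseII}) was derived from the distributive property (\ref{(distributive)}) applied to the dot-product umbra; since $\beta \punt \kappa_{{\scriptscriptstyle \alpha}}$ is itself an umbra (every umbra being a partition umbra), (\ref{(distributive)}) and (\ref{(distributive1)}) apply to it, giving
$$(t+s) \punt \beta \punt \kappa_{{\scriptscriptstyle \alpha}} \equiv t \punt \beta \punt \kappa_{{\scriptscriptstyle \alpha}} + s \punt \beta \punt \kappa_{{\scriptscriptstyle \alpha}}^{\prime}, \qquad -t \punt \beta \punt \kappa_{{\scriptscriptstyle \alpha}} + t \punt \beta \punt \kappa_{{\scriptscriptstyle \alpha}}^{\prime} \equiv \eps.$$
Plugging these into the chain of equalities in the proof of Theorem \ref{UTSH2}, with $s \punt \alpha$ playing the role of the conditioning umbra $q(s) = s \punt \beta \punt \kappa_{{\scriptscriptstyle \alpha}} \equiv s \punt \alpha$, one obtains $E\left(Q_k(t \punt \alpha, t) \, \vline \, s \punt \alpha\right) = \sum_{j=0}^k \binom{k}{j} (s \punt \alpha)^j E[(-s \punt \alpha)^{k-j}] = Q_k(s \punt \alpha, s)$, which is exactly Definition \ref{condeval}.

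The only point that genuinely requires care — and the step I expect to be the main obstacle — is the legitimacy of substituting the iterated dot-product umbra $\beta \punt \kappa_{{\scriptscriptstyle \alpha}}$ for a plain umbra $\alpha$ everywhere in the argument: one must be sure that the distributive property (\ref{(distributive)}) and the conditional-evaluation identity (\ref{inverseII}) hold for $t \punt (\beta \punt \kappa_{{\scriptscriptstyle \alpha}})$ and not merely for $t \punt \gamma$ with $\gamma$ a "simple" umbra. This is handled by working in the saturated umbral calculus, where auxiliary umbrae are treated on the same footing as elements of $\A$, so that $\beta \punt \kappa_{{\scriptscriptstyle \alpha}}$ may be renamed as a single umbra $\gamma$ with moments $E[\gamma^n] = Y_n(h_1,\dots,h_n)$; all the cited properties then apply to $\gamma$ directly. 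Alternatively, and perhaps more cleanly, one simply invokes $t \punt \alpha \equiv t \punt \beta \punt \kappa_{{\scriptscriptstyle \alpha}}$ from Remark \ref{cumlev} to note that (\ref{(tshumbral1)}) and (\ref{(tshumbral)}) define the same sequence of polynomials and the same family of auxiliary umbrae, whence the theorem is a restatement of Theorem \ref{UTSH2}. I would present the short version as the proof and relegate the justification of the substitution to a parenthetical remark about the saturated umbral calculus.
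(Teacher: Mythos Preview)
Your proposal is correct and matches the paper's approach: the paper gives no explicit proof of Theorem~\ref{AST_Levy1}, treating it as an immediate restatement of Theorem~\ref{UTSH2} via the similarity $t \punt \alpha \equiv t \punt \beta \punt \kappa_{{\scriptscriptstyle \alpha}}$ established just above in the cumulants discussion (and in Remark~\ref{cumlev}). Your short version---invoke that similarity so that (\ref{(tshumbral1)}) and (\ref{(tshumbral)}) define the same polynomials and the same family of auxiliary umbrae---is exactly the intended argument; the longer justification about the saturated umbral calculus is sound but more than the paper itself supplies.
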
 
The umbra $-t \punt \alpha \equiv t \punt (-1 \punt \alpha)$ is the L\'evy umbra associated to the umbra $-1 \punt \alpha. $ Therefore, also the polynomials $Q_k(x,t) = E[(x + t \punt \beta \punt \kappa_{{\scriptscriptstyle \alpha}})^k]$ are time-space harmonic with respect to L\'evy processes umbrally represented by $\{-t \punt \alpha\}_{t \geq 0}.$ For simplicity, the following results refer to this last class of polynomials, but they can be stated also for the polynomials given in (\ref{(tshumbral1)}). 

Moments of a polynomial umbra as $x + t \punt \beta \punt \gamma$ are generalizations of exponential complete Bell polynomials, see \cite{Sheffer}.  
\begin{prop}[Sheffer identity with respect to $t$] \label{main}
For the sequence of polynomials $\{Q_k(x,t)\}$ in (\ref{(tshumbral1)}) the following identity holds
$$Q_k(x,t+s)=\sum_{j=0}^k \binom{k}{j} P_j(s) Q_{k-j}(x,t),$$
where $P_j(s)=Q_{j}(0,s)$ for all nonnegative integers $j.$ 
\end{prop}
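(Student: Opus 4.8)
The plan is to work with the generating-function identity (\ref{(wald)}), adapted to the L\'evy umbra $-t\punt\alpha\equiv t\punt\beta\punt\kappa_{{\scriptscriptstyle\alpha}}$, since the Sheffer identity for an Appell-type sequence is most transparent at the level of formal power series. Recall from (\ref{(wald)}) that the sequence $\{Q_k(x,t)\}$ in (\ref{(tshumbral1)}) has generating function
\begin{equation}
\sum_{k\geq 0}Q_k(x,t)\frac{z^k}{k!}=f(x+t\punt\beta\punt\kappa_{{\scriptscriptstyle\alpha}},z)=\exp\{xz\}\cdot f(t\punt\beta\punt\kappa_{{\scriptscriptstyle\alpha}},z),
\label{genfnQ}
\end{equation}
where I have used the uncorrelation of $x$ (i.e.\ $x\punt u$) and $t\punt\beta\punt\kappa_{{\scriptscriptstyle\alpha}}$. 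By Remark \ref{cumlev}, $f(t\punt\beta\punt\kappa_{{\scriptscriptstyle\alpha}},z)=\exp\{t(f(\kappa_{{\scriptscriptstyle\alpha}},z)-1)\}$, so the ``$t$-part'' of the generating function factors multiplicatively: $f((t+s)\punt\beta\punt\kappa_{{\scriptscriptstyle\alpha}},z)=f(t\punt\beta\punt\kappa_{{\scriptscriptstyle\alpha}},z)\cdot f(s\punt\beta\punt\kappa_{{\scriptscriptstyle\alpha}},z)$, which is just the distributive property (\ref{(distributive)}) read through generating functions.

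The key computation then runs as follows. First I would write, using (\ref{genfnQ}) with $t$ replaced by $t+s$ and the factorization above,
\begin{equation}
\sum_{k\geq 0}Q_k(x,t+s)\frac{z^k}{k!}=\exp\{xz\}\,f(t\punt\beta\punt\kappa_{{\scriptscriptstyle\alpha}},z)\,f(s\punt\beta\punt\kappa_{{\scriptscriptstyle\alpha}},z)=\Bigl(\sum_{k\geq 0}Q_k(x,t)\frac{z^k}{k!}\Bigr)\Bigl(\sum_{j\geq 0}Q_j(0,s)\frac{z^j}{j!}\Bigr),
\label{Shefferprod}
\end{equation}
where in the last factor I used (\ref{genfnQ}) once more with $x=0$, so that $\exp\{0\cdot z\}=1$ and the series is exactly $\sum_j Q_j(0,s)z^j/j!$. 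Then I would set $P_j(s):=Q_j(0,s)$ and extract the coefficient of $z^k/k!$ from both sides of (\ref{Shefferprod}) via the Cauchy product of formal power series, obtaining
$$Q_k(x,t+s)=\sum_{j=0}^k\binom{k}{j}P_j(s)\,Q_{k-j}(x,t),$$
which is the asserted identity. Everything here is a manipulation of formal power series in $\mathbb{R}[x][[z]]$, so no convergence issues arise, in keeping with the remark following (\ref{(wald)}).

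Alternatively — and this is the route I would actually present if one wants to stay purely umbral and avoid invoking generating functions — one can argue directly from (\ref{(tshumbral1)}): since $(t+s)\punt\beta\punt\kappa_{{\scriptscriptstyle\alpha}}\equiv t\punt\beta\punt\kappa_{{\scriptscriptstyle\alpha}}+s\punt\beta\punt\kappa'_{{\scriptscriptstyle\alpha}}$ by (\ref{(distributive)}) (with $\kappa'_{{\scriptscriptstyle\alpha}}\equiv\kappa_{{\scriptscriptstyle\alpha}}$ and the two dot-product umbrae uncorrelated), expand
$$(x+(t+s)\punt\beta\punt\kappa_{{\scriptscriptstyle\alpha}})^k\simeq\bigl((x+t\punt\beta\punt\kappa_{{\scriptscriptstyle\alpha}})+s\punt\beta\punt\kappa'_{{\scriptscriptstyle\alpha}}\bigr)^k=\sum_{j=0}^k\binom{k}{j}(x+t\punt\beta\punt\kappa_{{\scriptscriptstyle\alpha}})^{k-j}(s\punt\beta\punt\kappa'_{{\scriptscriptstyle\alpha}})^{j},$$
apply $E$, and use the uncorrelation of $x+t\punt\beta\punt\kappa_{{\scriptscriptstyle\alpha}}$ and $s\punt\beta\punt\kappa'_{{\scriptscriptstyle\alpha}}$ to factor $E$ over the product, yielding $\sum_{j}\binom{k}{j}Q_{k-j}(x,t)\,E[(s\punt\beta\punt\kappa_{{\scriptscriptstyle\alpha}})^j]$; finally note $E[(s\punt\beta\punt\kappa_{{\scriptscriptstyle\alpha}})^j]=E[(0+s\punt\beta\punt\kappa_{{\scriptscriptstyle\alpha}})^j]=Q_j(0,s)=P_j(s)$. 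The only subtlety to be careful about — the ``main obstacle,'' though it is minor — is the bookkeeping of which copies of the cumulant umbra are uncorrelated from which: one must make sure the $x$-part and the two $t$- and $s$-pieces are mutually uncorrelated so that both the binomial expansion and the factorization of $E$ are legitimate, and that the similarity $s\punt\beta\punt\kappa'_{{\scriptscriptstyle\alpha}}\equiv s\punt\beta\punt\kappa_{{\scriptscriptstyle\alpha}}$ is used only inside an evaluation. This is exactly the same care exercised in the proof of Theorem \ref{UTSH2}, so it is routine here.
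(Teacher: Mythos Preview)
Your proposal is correct, and your ``alternative'' umbral route is exactly the paper's (one-line) proof: use the distributive property (\ref{(distributive)}) to write $(t+s)\punt\beta\punt\kappa_{{\scriptscriptstyle\alpha}}$ as the sum of two uncorrelated copies, expand $\bigl(x+t\punt\beta\punt\kappa_{{\scriptscriptstyle\alpha}}+s\punt\beta\punt\kappa'_{{\scriptscriptstyle\alpha}}\bigr)^k$ binomially, and factor $E$ by uncorrelation to get $\sum_j\binom{k}{j}E[(s\punt\beta\punt\kappa_{{\scriptscriptstyle\alpha}})^j]\,Q_{k-j}(x,t)$. Your generating-function argument is the same computation read through (\ref{(wald)}); it is valid but adds nothing here, since the Appell structure already makes the binomial expansion immediate. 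One cosmetic slip: the similarity you state at the outset, $-t\punt\alpha\equiv t\punt\beta\punt\kappa_{{\scriptscriptstyle\alpha}}$, is not what the paper says (it is $t\punt\alpha\equiv t\punt\beta\punt\kappa_{{\scriptscriptstyle\alpha}}$); this does not affect your argument, which correctly works with $Q_k(x,t)=E[(x+t\punt\beta\punt\kappa_{{\scriptscriptstyle\alpha}})^k]$ as the paper does just before Proposition~\ref{main}.
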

\begin{proof}
The result follows as $Q_k(x,t+s)  = E\left([x + t \punt \beta \punt 
\kappa_{{\scriptscriptstyle \alpha}} + s \punt \beta \punt
\kappa_{{\scriptscriptstyle \alpha}}]^ k\right)  =  \sum_{j=0}^k \binom{k}{j} E\left[(s \punt \beta \punt
\kappa_{{\scriptscriptstyle \alpha}})^j\right]$ $E\left[(x + t \punt \beta \punt
\kappa_{{\scriptscriptstyle \alpha}})^{k-j}\right].$ 
\end{proof}
\begin{cor}
For the coefficients $\{q_j^{(k)}(t)\}$ of the sequence of polynomials $\{Q_k(x,t)\}$ in (\ref{(tshumbral1)}), 
we have  
$$\frac{{\rm d}}{{\rm d} t} q_j^{(k)}(t) = \sum_{i=1}^{k-j} {k \choose i} h_i q_j^{(k-i)}(t)$$
for $j=1,\ldots,k,$ where $\{h_i\}$ is the sequence of cumulants of $\alpha.$ 
\end{cor}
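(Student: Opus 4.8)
The plan is to differentiate the Sheffer identity from Proposition~\ref{main} with respect to $s$ and then evaluate at $s=0$, exploiting the fact that $P_j(s) = Q_j(0,s)$ is itself a time-space harmonic polynomial umbrally represented by $s\punt\beta\punt\kappa_{{\scriptscriptstyle\alpha}}$. First I would recall that $Q_j(0,s) = E[(s\punt\beta\punt\kappa_{{\scriptscriptstyle\alpha}})^j]$ and that, by Remark~\ref{cumlev}, the generating function is $f(s\punt\beta\punt\kappa_{{\scriptscriptstyle\alpha}},z) = \exp[s\,k(z)]$ where $k(z) = \sum_{i\geq 1} h_i z^i/i!$ is the cumulant generating function of $\alpha$ (equivalently of $X_1$). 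Hence $\frac{{\rm d}}{{\rm d}s} P_j(s)\big|_{s=0}$ extracts the coefficient of $z^j/j!$ in $k(z)\exp[s\,k(z)]\big|_{s=0} = k(z)$, giving $\frac{{\rm d}}{{\rm d}s} P_j(s)\big|_{s=0} = h_j$ for $j\geq 1$, while $P_0(s) \equiv 1$ so its derivative vanishes.

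Next I would start from $Q_k(x,t+s) = \sum_{j=0}^k \binom{k}{j} P_j(s) Q_{k-j}(x,t)$, differentiate both sides in $s$, and set $s=0$. On the left, $\frac{{\rm d}}{{\rm d}s} Q_k(x,t+s)\big|_{s=0} = \frac{{\rm d}}{{\rm d}t} Q_k(x,t)$. On the right, only the $j\geq 1$ terms survive (the $j=0$ term contributes $P_0'(0) Q_k(x,t) = 0$), yielding
\begin{equation}
\frac{{\rm d}}{{\rm d}t} Q_k(x,t) = \sum_{j=1}^k \binom{k}{j} h_j \, Q_{k-j}(x,t).
\label{(derQ)}
\end{equation}
Then I would read off the coefficient of $x^j$ on both sides: writing $Q_k(x,t) = \sum_{j} q_j^{(k)}(t)\,x^j$ and $Q_{k-i}(x,t) = \sum_j q_j^{(k-i)}(t)\,x^j$, equation~(\ref{(derQ)}) with summation index renamed to $i$ gives $\frac{{\rm d}}{{\rm d}t} q_j^{(k)}(t) = \sum_{i=1}^{k} \binom{k}{i} h_i\, q_j^{(k-i)}(t)$; since $q_j^{(k-i)}(t) = 0$ whenever $k-i < j$, i.e. $i > k-j$, the sum truncates at $i = k-j$, which is exactly the claimed identity for $j = 1,\ldots,k$.

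The only mildly delicate point is justifying term-by-term differentiation in $s$: since everything lives in the formal framework, the cleanest route is to differentiate the generating function identity $f(x+(t+s)\punt\beta\punt\kappa_{{\scriptscriptstyle\alpha}},z) = \exp[xz]\exp[(t+s)k(z)]$ with respect to $s$ in ${\mathbb R}[x][[z]]$ and extract coefficients, rather than manipulating the polynomial identity directly; this sidesteps any analytic concern and makes the computation of $P_j'(0) = h_j$ automatic. I expect no real obstacle here — the argument is essentially a one-line consequence of Proposition~\ref{main} together with the explicit form of the generating function from Remark~\ref{cumlev}.
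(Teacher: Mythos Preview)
Your argument is correct. It differs from the paper's proof in its mechanism, though the two are close cousins. The paper works directly on the single coefficient $q_j^{(k)}(t)=\binom{k}{j}E[(t\punt\beta\punt\kappa_{\alpha})^{k-j}]$ and invokes the umbral differentiation rule from \cite{Sheffer}, namely $\frac{{\rm d}}{{\rm d}t}f(t)\simeq f(t+\chi)-f(t)$, so that the singleton umbra $\chi$ plays the role of an infinitesimal increment; expanding $[(t+\chi)\punt\beta\punt\kappa_{\alpha}]^{k-j}$ binomially then produces the cumulants $h_i$ immediately because $\chi\punt\beta\punt\kappa_{\alpha}\equiv\kappa_{\alpha}$. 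You instead differentiate the already-established Sheffer identity (Proposition~\ref{main}) in the auxiliary variable $s$, identify $P_j'(0)=h_j$ via the exponential generating function $\exp[s\,k(z)]$, and only at the end compare coefficients of $x^j$. Your route avoids the $\chi$-trick entirely and relies only on standard formal-power-series calculus plus a result already on the page; the paper's route is more intrinsically umbral and bypasses the intermediate polynomial identity~(\ref{(derQ)}) by computing coefficient-wise from the start. Both reach the same truncated sum for the same reason, $\binom{k}{j}\binom{k-j}{i}=\binom{k}{i}\binom{k-i}{j}$.
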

\begin{proof} 
Consider $t$ as an indeterminate and observe that
$\frac{{\rm d}}{{\rm d} t} q_j^{(k)}(t) = \binom{k}{j} \frac{{\rm d}}{{\rm d} t}E\left[(t\punt\beta\punt
\kappa_{{\scriptscriptstyle \alpha}})^{k-j}\right].$  We have 
\begin{equation}
\frac{{\rm d}}{{\rm d} t} q_j^{(k)}(t) \simeq \binom{k}{j}
\left\{[(t+\chi)\punt\beta\punt\kappa_{{\scriptscriptstyle \alpha}}]^{k-j}-(t\punt\beta
\punt\kappa_{{\scriptscriptstyle \alpha}})^{k-j}\right\} \simeq \binom{k}{j}\sum_{i=1}^{k-j}\binom{k-j}{i}  \kappa_{{\scriptscriptstyle \alpha}}^i(t\punt\beta\punt \kappa_{{\scriptscriptstyle \alpha}})^{k-j-i}, 
\label{(last1)}
\end{equation} 
see \cite{Sheffer} for the first equivalence. The result follows by taking the evaluation of both sides
in (\ref{(last1)}).
\end{proof}
%
%---------------------------------------------------------------------
\subsection{Special families of polynomials}
%---------------------------------------------------------------------
In this section we show that some classical families of time-space harmonic 
polynomials are moments of the polynomial umbra $x - t \punt \alpha,$ or can 
be recovered as a linear combination of its moments, for 
a suitable umbra $\alpha.$ 
%-----------------------------------------------------------------------------
\subparagraph{Hermite polynomials.}
%------------------------------------------------------------------------------
Standard Brownian motion is a special L\'evy process. From the L\'evy-Khintchine formula \cite{sato}, its generating
function is $\phi(z,t)=\exp(t \, z^2 /2).$  Therefore, the umbral counterpart of a standard Brownian
motion is the umbra $t \punt \beta \punt \delta,$ where $f(\delta,z)=1+z^2/2.$ From Theorem \ref{AST_Levy1},
time-space harmonic polynomials with respect to a standard Brownian motion are $Q_k(x,t) = E[(x-t\punt\beta\punt\delta)^k].$
\begin{prop}
For all nonnegative integers $k$ we have $Q_k(x,t) = H_k^{(t)}(x),$ where $H_k^{(s^2)}(x)$
are the generalized Hermite polynomials with generating function
$\sum_{k \geq 0} H_k^{(s^2)}(x)\frac{z^k}{k!} = \exp\{xz - s^2\frac{z^2}{2}\}.$
\end{prop}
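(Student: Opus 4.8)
The plan is to compute the generating function of the polynomial umbra $x - t \punt \beta \punt \delta$ and match it against the given generating function of the generalized Hermite polynomials. By the remark on generating functions of time-space harmonic polynomial umbrae, namely equation (\ref{(wald)}), we have
$$\sum_{k \geq 0} Q_k(x,t) \frac{z^k}{k!} = f(x - t \punt \beta \punt \delta, z) = \frac{\exp\{xz\}}{f(\beta \punt \delta, z)^t},$$
so it suffices to identify $f(\beta \punt \delta, z)$. First I would recall from Remark \ref{cumlev} (or directly from $f(\beta \punt \kappa_{\scriptscriptstyle\alpha}, z) = \exp[f(\kappa_{\scriptscriptstyle\alpha},z) - 1]$) that $f(\beta \punt \delta, z) = \exp[f(\delta,z) - 1]$, and since $f(\delta,z) = 1 + z^2/2$ by hypothesis, this gives $f(\beta \punt \delta, z) = \exp\{z^2/2\}$. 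Hence $f(\beta \punt \delta, z)^t = \exp\{t z^2/2\}$, which is exactly the generating function $\phi(z,t)$ of standard Brownian motion quoted just above the statement, confirming consistency.

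Substituting back, I obtain
$$\sum_{k \geq 0} Q_k(x,t) \frac{z^k}{k!} = \frac{\exp\{xz\}}{\exp\{t z^2/2\}} = \exp\left\{xz - t\, \frac{z^2}{2}\right\}.$$
On the other hand, the generalized Hermite polynomials are defined by $\sum_{k \geq 0} H_k^{(s^2)}(x) \frac{z^k}{k!} = \exp\{xz - s^2 \frac{z^2}{2}\}$. Setting $s^2 = t$ (legitimate since $t \geq 0$), the right-hand sides coincide as formal power series in $z$. Since equality of formal power series means equality of coefficients, we conclude $Q_k(x,t) = H_k^{(t)}(x)$ for all nonnegative integers $k$, as claimed.

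I do not expect a genuine obstacle here: the argument is a short formal-power-series manipulation once the identity $f(\beta \punt \delta, z) = \exp[f(\delta,z)-1]$ is invoked. The only point requiring a word of care is the passage from equality of generating functions to equality of the polynomials themselves, but this is exactly the convention already stated in the excerpt ("Equality of two formal power series is interpreted as the equality of their coefficients"). One could alternatively avoid generating functions entirely and argue via $t \punt \beta \punt \delta$ being the L\'evy umbra of Brownian motion together with the Appell property from Proposition \ref{Appell}, but the generating-function route is the most direct.
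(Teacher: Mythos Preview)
Your proof is correct and self-contained: you compute the generating function of $x - t \punt \beta \punt \delta$ directly from (\ref{(wald)}) together with $f(\beta \punt \delta, z) = \exp[f(\delta,z)-1]$, obtain $\exp\{xz - t z^2/2\}$, and match coefficients with the defining generating function of $H_k^{(t)}(x)$. The paper takes a different route: it quotes from \cite{dinoli} the umbral representation $H_k^{(s^2)}(x) = E\left[(x - 1 \punt \beta \punt (s\delta))^k\right]$ and then proves the umbral equivalence $-1 \punt \beta \punt (\sqrt{t}\,\delta) \equiv -t \punt \beta \punt \delta$. Your argument has the advantage of being entirely internal to the framework already developed in the paper, without appealing to an outside identity; the paper's version is a touch shorter but relies on that external reference, and in exchange it surfaces the pleasant scaling identity for partition umbrae that your generating-function computation bypasses. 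Either approach is fully adequate here.
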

\begin{proof}
We have $H_k^{(s^2)}(x) = E \left[ \left(x - 1 \punt \beta \punt (s\delta)\right)^k \right],$ see 
\cite{dinoli}. The result follows by observing that $-1 \punt \beta \punt (\sqrt{t}\delta) \equiv 
-t \punt \beta \punt \delta.$
\end{proof}
%--------------------------------------------------------------------------------------------------------
\subparagraph{Poisson-Charlier polynomials.}
%--------------------------------------------------------------------------------------------------------
It is known that the Poisson-Charlier polynomials $\widetilde{C}_k(x + t,t)$ are time-space harmonic with respect to the compensated Poisson process \cite{Sole}, with $\{\widetilde{C}_k(x,t)\}$ polynomials having generating function $\sum_{k\geq 0}\widetilde{C}_k(x,t)\frac{z^k}{k!}=e^{-tz}(1+z)^x.$ We will recover this result, by proving that $\widetilde{C}_k(x + t,t)$ is a linear combination of the polynomials $Q_k(x,t)$ in (\ref{(tshumbral)}) when they are time-space harmonic with respect to the umbral counterpart of a compensated Poisson process.

Let $\{N_t\}_{t \geq 0}$ be a Poisson process of intensity $1$ and $X_t = N_t -t$ the compensated process \cite{schoutens2}. From the L\'evy-Khintchine formula \cite{sato}, its generating
function is $\phi(z,t)=\exp[t (e^z - z)].$ The umbra $\gamma$ having generating function $f(\gamma,z)=
e^z - z$ is the disjoint difference of the unity umbra $u$ and the singleton umbra $\chi.$ Indeed recall that
given two distinct umbrae $\delta_1$ and $\delta_2,$ their \emph{disjoint difference} is an auxiliary umbra, denoted by the symbol $\delta_1 \, \dot{-} \, \delta_2,$ with generating function $f(\delta_1,z) - f(\delta_2,z) + 1.$
Therefore, the umbral counterpart of a compensated Poisson process is $t \punt \beta \punt (u \dot{-} \chi).$ 
From Theorem \ref{AST_Levy1}, time-space harmonic polynomials with respect to a compensated Poisson process are $Q_k(x,t) = E[\{x-t \punt \beta \punt (u \dot{-} \chi)\}^k].$
\begin{prop}
We have $\widetilde{C}_k(x + t,t) = \sum_{j=1}^k s(k,j) Q_j(x,t),$ where $s(k,j)$ are the Stirling numbers of the first kind.
\end{prop}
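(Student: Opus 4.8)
The plan is to establish the identity by a generating-function computation, using the fact that the exponential generating function of the (signed) Stirling numbers of the first kind is $\sum_{k\geq j} s(k,j)\, w^k/k! = [\ln(1+w)]^j/j!$. Consequently, forming the linear combination $\sum_{j} s(k,j) Q_j(x,t)$ and collecting by $w^k/k!$ is exactly the same as substituting $z=\ln(1+w)$ into the generating function of the sequence $\{Q_k(x,t)\}$; the whole proof then reduces to checking that this substitution transforms the generating function of $\{Q_k(x,t)\}$ into that of $\{\widetilde{C}_k(x+t,t)\}$.

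First I would record the generating function of $\{Q_k(x,t)\}$. Since $f(u\dot{-}\chi,z)=e^z-z$ and $f(t\punt\beta\punt\gamma,z)=\exp[t(f(\gamma,z)-1)]$ (Remark \ref{cumlev}), we get $f(t\punt\beta\punt(u\dot{-}\chi),z)=\exp[t(e^z-z-1)]$; hence, by the Wald-type identity (\ref{(wald)}) applied to the L\'evy umbra $t\punt\beta\punt(u\dot{-}\chi)$,
$$\sum_{k\geq 0}Q_k(x,t)\frac{z^k}{k!}=\frac{\exp\{xz\}}{\exp[t(e^z-z-1)]}=\exp\{xz-t(e^z-z-1)\}.$$

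Next I would substitute $z=\ln(1+w)$, which is legitimate in ${\mathbb R}[x][[w]]$ because $\ln(1+w)$ has zero constant term. Using $e^{\ln(1+w)}=1+w$, the exponent becomes $x\ln(1+w)-t[(1+w)-\ln(1+w)-1]=(x+t)\ln(1+w)-tw$, so that
$$\sum_{j\geq 0}Q_j(x,t)\frac{[\ln(1+w)]^j}{j!}=(1+w)^{x+t}e^{-tw}=\sum_{k\geq 0}\widetilde{C}_k(x+t,t)\frac{w^k}{k!},$$
the last equality being the defining generating function of the Poisson-Charlier polynomials. On the other hand, plugging $[\ln(1+w)]^j/j!=\sum_{k\geq j}s(k,j)\,w^k/k!$ into the left-hand side and interchanging the two formal summations rewrites it as $\sum_{k\geq 0}\bigl(\sum_{j=0}^{k}s(k,j)Q_j(x,t)\bigr)w^k/k!$; equating coefficients of $w^k/k!$ and using $s(k,0)=0$ for $k\geq 1$ yields $\widetilde{C}_k(x+t,t)=\sum_{j=1}^{k}s(k,j)Q_j(x,t)$.

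All the steps are routine formal-power-series manipulations, so I do not expect a genuine obstacle; the only points requiring a little care are the sign and convention bookkeeping, namely that in (\ref{(wald)}) the inverse umbra $-t\punt\beta\punt(u\dot{-}\chi)$ contributes the reciprocal $1/\exp[t(e^z-z-1)]$, and the observation that the substitution $z\mapsto\ln(1+w)$ is precisely the operation dual to the exponential generating function of the Stirling numbers of the first kind, which is what produces the coefficients $s(k,j)$.
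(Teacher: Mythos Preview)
Your argument is correct. The generating function of $\{Q_k(x,t)\}$ is indeed $\exp\{xz-t(e^{z}-z-1)\}$, the substitution $z=\ln(1+w)$ turns it into $(1+w)^{x+t}e^{-tw}$, and the classical identity $[\ln(1+w)]^{j}/j!=\sum_{k\ge j}s(k,j)\,w^{k}/k!$ converts the substitution into the desired linear combination.

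The paper proceeds differently, entirely within the umbral apparatus. It first identifies $\widetilde{C}_k(x,t)$ as the $k$-th moment of the polynomial umbra $x\punt\chi-t$, then rewrites $(x+t)\punt\chi-t$ as $\bigl((x\punt\chi-t)\punt\beta+t\bigr)\punt\chi$ via the distributive law and $\beta\punt\chi\equiv u$. Since $E[(\eta\punt\chi)^k]=E[(\eta)_k]$, expanding the falling factorial introduces the Stirling numbers $s(k,j)$; the remaining task is the umbral identity $x-t\punt\beta\punt(u\dot{-}\chi)\equiv(x\punt\chi-t)\punt\beta+t$, which is checked by a short chain of similarities. Your route is more elementary and self-contained---it needs nothing beyond the two generating functions and the standard Stirling identity---whereas the paper's route keeps everything phrased as equivalences of auxiliary umbrae, which has the advantage of exhibiting explicitly the umbral object whose moments are $\widetilde{C}_k(x+t,t)$ and whose falling-factorial moments are $Q_j(x,t)$.
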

\begin{proof}
By comparing generating functions, the sequence of polynomials $\{\widetilde{C}_k(x,t)\}_{k \geq 0}$ is umbrally represented
by the polynomial umbra $x \punt \chi - t.$ Therefore, we have $\widetilde{C}_k(x + t,t) =
E[((x+t) \punt \chi - t)^k].$ We also have $(x+t) \punt \chi - t \equiv (x \punt \chi - t) + t \punt \chi
\equiv ((x \punt \chi - t) \punt \beta + t) \punt \chi,$ due to the distributive property (\ref{(distributive1)}) and  $\beta \punt \chi \equiv u.$ Since $E[(\alpha \punt \chi)^k]=E[(\alpha)_k],$
see \cite{Dinardoeurop}, we have $\widetilde{C}_k(x + t,t) = E[((x \punt \chi - t) \punt \beta + t)_k]
= \sum_{j=0}^k s(k,j) E[((x \punt \chi - t) \punt \beta + t)^j].$ The result follows by showing that
$x-t \punt \beta \punt (u \dot{-} \chi) \equiv (x \punt \chi - t) \punt \beta + t.$ 
Indeed we have $x - t \punt \beta \punt (u \dot{-} \chi) \equiv  x - t \punt \beta - t \punt \beta \punt (-\chi) \equiv (x - t \punt \beta - t \punt \beta \punt (-\chi)) \punt \chi \punt \beta \equiv (x \punt \chi - t - t \punt \beta
\punt (-\chi) \punt \chi) \punt \beta,$ where we have used again $\chi \punt \beta \equiv u$ and the distributive property (\ref{(distributive1)}). The result follows since $- \chi \equiv \chi \punt (-1)$ and
$- t \punt \beta \punt (-\chi) \punt \chi \equiv - t \punt \beta \punt \chi
\punt (-1) \punt \chi \equiv t \punt \chi.$ 
\end{proof}
%----------------------------------------------------------------------------------------
\subparagraph{L\'evy-Sheffer polynomials.}
%----------------------------------------------------------------------------------------
According to the definition given in \cite{ST}, a sequence of polynomials $\{V_k(x,t)\}_{t \geq 0}$ is a
L\'evy-Sheffer system if it is defined by the following generating function 
\begin{equation}
\sum_{k \geq 0} V_k(x,t)\frac{z^k}{k!} = \left(g(z)\right)^t \exp\{x u(z)\},
\label{genfunlevshef}
\end{equation}
where
$g(z)$ and $u(z)$ are analytic in a neighborhood of $z = 0,$ $u(0) = 0,$ $g(0) = 1,$ $u'(0) \neq 0$ 
and $1/g(\tau(z))$ is an infinitely divisible moment generating function, with
$\tau(z)$ such that $\tau(u(z)) = z.$ Schoutens in \cite{ST} states that the basic link between 
these polynomials and L\'evy processes is a {\it martingale equality} (cf. pag. 337 eq. (6)), which 
is equivalent to ask that these polynomials are time-space harmonic with respect to
L\'evy processes. 

In the following, we show that $V_k(x,t)$ is a linear combination of suitable time-space harmonic polynomials
$Q_k(x,t)$ and therefore, they share the same property. 

To this aim we need to recall the notion of compositional inverse of an umbra $\alpha.$ 
Recall first that the auxiliary umbra $\alpha \punt \beta \punt \gamma$ is the \emph{composition umbra} of 
$\alpha$ and $\gamma,$ see \cite{Dinardoeurop}. The name recalls that
its generating function is the composition of $f(\alpha, z)$ and $f(\gamma, z),$
that is $f(\alpha \punt \beta \punt \gamma,z) = f(\alpha, f(\gamma, z) - 1).$ Moreover its
moments are 
\begin{equation}
E[(\alpha \punt \beta \punt \gamma)^i] = \sum_{k=1}^i a_k B_{i,k}(g_1, \ldots, g_{i-k+1}).
\label{(4c)}
\end{equation}
If $E[\alpha] \ne 0,$ the compositional inverse of an umbra $\alpha$ is the auxiliary umbra 
$\alpha^{{\scriptscriptstyle <-1>}}$ such that $\alpha \punt \beta \punt \alpha^{{\scriptscriptstyle <-1>}}
\equiv \alpha^{{\scriptscriptstyle <-1>}} \punt \beta \punt \alpha \equiv \chi,$ with $\chi$ the singleton
umbra. 
\begin{thm}
We have 
$V_k(x,t) = \sum_{i=0}^k E[(x + t \punt \beta \punt \kappa)^i] B_{k,i}(g_1, \ldots, g_{k-i+1}),$ where $g_j = E[\gamma^j],$ for all nonnegative $j$ and $\kappa$ is the cumulant umbra of $\alpha \punt \beta \punt \gamma^{{\scriptscriptstyle <-1>}},$ 
with $\gamma^{{\scriptscriptstyle <-1>}}$ the compositional inverse of the umbra $\gamma.$
\end{thm}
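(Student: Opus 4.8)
The plan is to reduce the L\'evy–Sheffer generating function $(g(z))^t \exp\{x u(z)\}$ to the generating function of a time‑space harmonic polynomial umbra composed with $\gamma$, and then read off the stated formula via the moment expansion \eqref{(4c)} for composition umbrae. First I would identify the umbral counterpart of the L\'evy–Sheffer system: since $1/g(\tau(z))$ is an infinitely divisible moment generating function with $\tau(u(z))=z$, there is an umbra $\alpha$ (more precisely a cumulant umbra $\kappa$) whose L\'evy umbra $t\punt\beta\punt\kappa$ has generating function $(g(\tau(z)))^{-t}\cdot(\text{something})$; the precise bookkeeping is to set $\kappa$ equal to the cumulant umbra of $\alpha\punt\beta\punt\gamma^{{\scriptscriptstyle<-1>}}$, where $\gamma$ is the umbra with $f(\gamma,z)-1 = u(z)$, so that $f(\gamma^{{\scriptscriptstyle<-1>}},z)-1 = \tau(z)$.

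The key steps, in order, would be: (1) Translate \eqref{genfunlevshef} into formal power series in the umbral language, writing $f(\gamma,z) = 1 + u(z)$ so that $g_j = E[\gamma^j]$ are the coefficients appearing in the statement, and writing $g(z)^{t} = f(\eta, z)^{t}$ for an appropriate umbra $\eta$ linked to the cumulants via $f(\eta,z)^t = \exp\{t\,(f(\kappa,z)-1)\}\big|_{z \mapsto \tau(z)}$, i.e. $g(z) = \exp\{f(\kappa, \tau(z)) - 1\}$. (2) Observe that, by definition of the composition umbra and of $\gamma^{{\scriptscriptstyle<-1>}}$, one has $f(\alpha\punt\beta\punt\gamma^{{\scriptscriptstyle<-1>}}, z) = f(\alpha, \tau(z))$, so that $\kappa = \chi\punt(\alpha\punt\beta\punt\gamma^{{\scriptscriptstyle<-1>}})$ has generating function whose exponential reproduces $g(z)$ after the substitution $z\mapsto u(z)$ is undone. (3) Conclude that the umbra $(x + t\punt\beta\punt\kappa)\punt\beta\punt\gamma$ has generating function
$$
f\big((x + t\punt\beta\punt\kappa)\punt\beta\punt\gamma, z\big) = f\big(x + t\punt\beta\punt\kappa,\, f(\gamma,z)-1\big) = \exp\{x\,u(z)\}\,(g(z))^{t},
$$
which is exactly the right‑hand side of \eqref{genfunlevshef}; hence $V_k(x,t) = E[((x + t\punt\beta\punt\kappa)\punt\beta\punt\gamma)^k]$. (4) Expand this last moment by formula \eqref{(4c)} with $\alpha$ replaced by the polynomial umbra $x + t\punt\beta\punt\kappa$ and with the inner umbra $\gamma$, giving
$$
V_k(x,t) = E[((x + t\punt\beta\punt\kappa)\punt\beta\punt\gamma)^k] = \sum_{i=0}^{k} E[(x + t\punt\beta\punt\kappa)^i]\, B_{k,i}(g_1, \ldots, g_{k-i+1}),
$$
which is the claimed identity. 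Finally, since $x + t\punt\beta\punt\kappa$ is a time‑space harmonic polynomial umbra with respect to $\{-t\punt\alpha'\}$ for the umbra $\alpha'$ with cumulant umbra $\kappa$ (Theorem \ref{AST_Levy1} applied to $-1\punt\alpha'$), the polynomials $V_k(x,t)$ are linear combinations of the $Q_i(x,t)$ and therefore time‑space harmonic, matching Schoutens's martingale equality.

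**Main obstacle.** The delicate point is step (2), namely verifying that the infinite‑divisibility hypothesis on $1/g(\tau(z))$, together with $g(0)=1$ and $u'(0)\neq 0$, exactly guarantees that $g(z) = \exp\{f(\kappa,\tau(z)) - 1\}$ for a legitimate (cumulant) umbra $\kappa$ — i.e. that the formal cumulant sequence extracted from $g(z)^{-1}\circ u$ is the moment sequence of $\chi\punt(\alpha\punt\beta\punt\gamma^{{\scriptscriptstyle<-1>}})$ for a genuine umbra $\alpha$, and that the compositional inverse $\gamma^{{\scriptscriptstyle<-1>}}$ is well defined (this is where $u'(0)\neq 0$, hence $E[\gamma]\neq 0$, is used). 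Once the correspondence between the analytic data $(g,u,\tau)$ and the umbral data $(\kappa,\gamma,\gamma^{{\scriptscriptstyle<-1>}})$ is pinned down, steps (3) and (4) are routine manipulations of generating functions and the composition‑umbra moment formula \eqref{(4c)}.
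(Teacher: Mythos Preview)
Your proposal is correct and follows essentially the same route as the paper: identify the L\'evy--Sheffer system with the composition umbra $(x + t \punt \alpha \punt \beta \punt \gamma^{{\scriptscriptstyle <-1>}}) \punt \beta \punt \gamma$ (equivalently $(x + t \punt \beta \punt \kappa)\punt\beta\punt\gamma$) and then read off the coefficients via the moment formula \eqref{(4c)}. The only cosmetic difference is that the paper sets $f(\alpha,z)=g(z)$ directly and obtains the key similarity $x \punt \beta \punt \gamma + t \punt \alpha \equiv (x + t \punt \alpha \punt \beta \punt \gamma^{{\scriptscriptstyle <-1>}}) \punt \beta \punt \gamma$ from the distributive property \eqref{(distributive1)}, whereas you verify the same identity by matching generating functions; your ``main obstacle'' about infinite divisibility is not an issue here, since in the formal power series setting one simply takes $\alpha$ to be any umbra with $f(\alpha,z)=g(z)$.
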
 
\begin{proof}
Due to the form of the generating function in (\ref{genfunlevshef}), the polynomials $\{V_k(x,t)\}_{k \geq 0}$ are umbrally represented by the 
polynomial umbra $t \punt \alpha + x \punt \beta \punt \gamma,$ with $f(\alpha,z)= g(z)$ and $f(\gamma,z) = 1 + u(z),$
and $E[\gamma] = u^{\prime}(0) \ne 0.$  
Thanks to the distributive property (\ref{(distributive1)}) we have $x \punt \beta \punt \gamma + t \punt \alpha \equiv (x + t \punt \alpha \punt \beta \punt \gamma^{{\scriptscriptstyle <-1>}}) \punt \beta \punt \gamma.$ The result follows 
by using (\ref{(4c)}). 
\end{proof}
\begin{cor}
The L\'evy-Sheffer polynomials $\{V_k(x,t)\}_{t \geq 0}$ are time-space harmonic with respect to
L\'evy processes umbrally represented by   
$\{-t \punt \alpha \punt \beta \punt \gamma^{{\scriptscriptstyle <-1>}}\}_{t \geq 0}.$
\end{cor}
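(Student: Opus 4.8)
The plan is to obtain the corollary as an immediate consequence of the preceding theorem, of Theorem \ref{AST_Levy1}, and of the linearity of the conditional evaluation. Set $\delta := \alpha \punt \beta \punt \gamma^{{\scriptscriptstyle <-1>}}$, so that the umbra $\kappa$ appearing in the preceding theorem is the cumulant umbra $\kappa_{{\scriptscriptstyle \delta}}$. That theorem expresses
$$V_k(x,t)=\sum_{i=0}^k E[(x+t\punt\beta\punt\kappa_{{\scriptscriptstyle\delta}})^i]\,B_{k,i}(g_1,\ldots,g_{k-i+1}),$$
and the crucial observation is that the coefficients $B_{k,i}(g_1,\ldots,g_{k-i+1})$ are real constants, independent of both $x$ and $t$. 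Hence, for each fixed $k$, the polynomial $V_k(x,t)$ is a finite linear combination, with constant coefficients, of the polynomials $\{E[(x+t\punt\beta\punt\kappa_{{\scriptscriptstyle\delta}})^i]\}_{i=0}^k$.

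Next I would identify each summand as time-space harmonic. Applying Theorem \ref{AST_Levy1} with $\alpha$ replaced by $\delta$, together with the remark immediately following it --- which records that $-t\punt\delta\equiv t\punt(-1\punt\delta)$ is the L\'evy umbra associated to $-1\punt\delta$ and that, consequently, the polynomials $E[(x+t\punt\beta\punt\kappa_{{\scriptscriptstyle\delta}})^i]$ are time-space harmonic with respect to $\{-t\punt\delta\}_{t\geq0}$ --- one recovers exactly the family in the statement, once one observes that $-t\punt\delta\equiv(-t\punt\alpha)\punt\beta\punt\gamma^{{\scriptscriptstyle <-1>}}$; this last similarity is read off the generating functions, both sides having generating function $f(\alpha,f(\gamma^{{\scriptscriptstyle <-1>}},z)-1)^{-t}$.

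Finally I would invoke linearity: since the conditional evaluation $E(\,\cdot\,\,\vline\,\,q(s))$ of Definition \ref{condeval1} is linear and each $E[(x+t\punt\beta\punt\kappa_{{\scriptscriptstyle\delta}})^i]$ satisfies the defining identity of Definition \ref{condeval} with respect to the family $\{-t\punt\delta\}_{t\geq0}$, the constant-coefficient combination $V_k(x,t)$ satisfies it as well, i.e. $V_k$ is time-space harmonic with respect to $\{-t\punt\alpha\punt\beta\punt\gamma^{{\scriptscriptstyle <-1>}}\}_{t\geq0}$. I do not anticipate a genuine obstacle: the only points needing a line of care are verifying that the Bell-polynomial coefficients are genuinely constant, so that the linear-combination argument of Section 3 applies verbatim, and matching the sign convention of the remark following Theorem \ref{AST_Levy1} with the sign of the L\'evy umbra in the statement. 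The term $i=0$ contributes only when $k=0$, where $V_0(x,t)=1$ is trivially time-space harmonic, so it is harmless.
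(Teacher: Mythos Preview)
Your proposal is correct and matches the paper's intended argument: the corollary is stated without proof, precisely because it is meant to follow immediately from the preceding theorem, the remark after Theorem~\ref{AST_Levy1} (that $E[(x+t\punt\beta\punt\kappa_{{\scriptscriptstyle\delta}})^i]$ is time-space harmonic with respect to $\{-t\punt\delta\}_{t\geq 0}$), and the observation in Section~3 that constant-coefficient linear combinations of the $Q_k$ remain time-space harmonic. Your additional verification that $-t\punt\delta\equiv(-t\punt\alpha)\punt\beta\punt\gamma^{{\scriptscriptstyle <-1>}}$ via generating functions is a harmless parsing check, not required by the paper, which simply reads $-t\punt\alpha\punt\beta\punt\gamma^{{\scriptscriptstyle <-1>}}$ as $-t\punt\delta$.
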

%-----------------------------------------------------------------------------------------------
\subparagraph{Time-space harmonic polynomials in terms of boolean and free cumulants.}
%---------------------------------------------------------------------------------------------
Let $M(z)$ be the ordinary generating function of a random variable $X$,
that is $M(z)= 1 + \sum_{i \geq 1}  a_i z^i,$ where $a_i = E[X^i]$. We have
$M(z) = 1/[1-B(z)],$ where $B(z)= \sum_{i \geq 1} b_i
z^i,$ and $b_i$ are the boolean cumulants of $X$.
The noncrossing (or free) cumulants of $X$ are the
coefficients $r_i$ of the ordinary power series $R(z) = 1 +
\sum_{i \geq 1} r_i z^i$ such that $M(z)=R[z M(z)].$
The umbral theory of boolean and free cumulants has been introduced in \cite{free}.
In particular, the $\alpha$-boolean cumulant umbra $\eta_
{\scriptscriptstyle\alpha}$ has moments $E[\eta_
{\scriptscriptstyle\alpha}^i]=b_i$ for all nonnegative integers $i.$ This umbra is such that
$\bar{\alpha} \equiv \bar{u} \punt \beta \punt \bar{\eta}_{\scriptscriptstyle\alpha},$
where $E[\bar{u}^n] = n!$ and $E[\bar{\alpha}^n] = n! a_n$ for all nonnegative integers $n.$ Thanks to Theorem
\ref{UTSH2}, the polynomials $Q_k(x,t) = E[(x - t \punt \bar{u} \punt \beta \punt
\bar{\eta}_{\scriptscriptstyle\alpha})^k]$ are time-space harmonic polynomials
with respect to the family $\{ t \punt \bar{\alpha} \}_{t \geq 0}$
with $\eta_{\scriptscriptstyle\alpha}$ the $\alpha$-boolean cumulant umbra.
The $\bar{\alpha} \,$-free cumulant
$\mathfrak{K}_{\scriptscriptstyle \bar{\alpha}}$ has moments
$E[\mathfrak{K}_{\scriptscriptstyle
\bar{\alpha}}^i] = i! r_i,$ for all nonnegative integers $i.$ This umbra allows us
a different parametrization of $Q_k(x,t) = E[(x - t \punt \bar{\alpha})^k].$
Indeed, if we denote by
$\bar{\alpha}_{\scriptscriptstyle D}$ the derivative umbra of $\bar{\alpha},$ such that
$f(\bar{\alpha}_{\scriptscriptstyle D},z) = 1 + z f(\bar{\alpha},
z),$
then we have $\bar{\alpha} \equiv {\bar{\mathfrak{K}}}_{\scriptscriptstyle \alpha} \punt
\beta \punt {(-1 \punt {\bar{\mathfrak {K}}}_{\scriptscriptstyle
\alpha})_{\scriptscriptstyle D}^{\scriptscriptstyle <-1>}}.$
Therefore, also the polynomials $Q_k(x,t) = E[(x + t \punt (- 1 \punt
{\bar{\mathfrak{K}}}_{\scriptscriptstyle \alpha}) \punt
\beta \punt {(-1 \punt {\bar{\mathfrak {K}}}_{\scriptscriptstyle
\alpha})_{\scriptscriptstyle D}^{\scriptscriptstyle <-1>}})^k]$ are time-space harmonic polynomials
with respect to the family $\{ t \punt \bar{\alpha} \}_{t \geq 0}.$
%------------------------------------------------------------------------------------------------------------------

\end{document}